\newtheorem{theorem}{Theorem}[section]
\newtheorem{lemma}[theorem]{Lemma}
\newtheorem{proposition}[theorem]{Proposition}	
\newtheorem{corollary}[theorem]{Corollary}
\newtheorem{remark}[theorem]{Remark}
\newtheorem{conjecture}[theorem]{Conjecture}
\newtheorem{Note}[theorem]{Notation}
\theoremstyle{definition}
\numberwithin{equation}{section}
\def\a{\mathfrak{a}}
\def\R{{\bf R}}
\def\C{{\bf C}}
\def\F{{\bf F}}
\def\H{{\bf H}}
\def\to{\rightarrow}
\begin{document}

\allowdisplaybreaks

\begin{center}
Sharp estimates for $W$-invariant Dunkl and heat kernels in the $A_n$ case\\
Piotr Graczyk and Patrice Sawyer
\end{center}

\begin{abstract}
In this article, we prove exact estimates for the 
$W$-invariant Dunkl kernel and   heat kernel, for the root system of type $A$ with arbitrary positive multiplicities. We apply the estimates of the  $W$-invariant Dunkl  heat kernel
to compute sharp estimates for the Newton kernel and for the $s$-stable semigroups generated by a fractional power of the $W$-invariant Dunkl Laplacian.
\end{abstract}

\noindent{\bf Keywords:} Dunkl kernel, root system, spherical function, heat kernel, Newton kernel, $s$-stable semigroup, fractional Dunkl Laplacian

%%\pacs[JEL Classification]{D8, H51}

\noindent{\bf MSC Classification:} 33C67, 43A90, 53C35, 60J35

\allowdisplaybreaks

\section{Introduction and notations}\label{intro}

	In Dunkl analysis for a root system $\Sigma$ on $\R^d$, a crucial role is played by the Dunkl kernel $E_k(X,Y)$ and by the Dunkl heat kernel  $p_t(X,Y)$.
	Finding good estimates of  the kernels $E_k$ and of the
Dunkl heat kernel $p_t$ 
is a challenging and important subject, developed recently in \cite{AnkerDH,PGPS1}. In this paper we prove exact estimates of both these kernels in the $W$-radial rational Dunkl case, for the root system $A_n$ with arbitrary positive multiplicities. 
	
	For a good introduction on rational Dunkl theory, the reader should consider the paper \cite{AnkerDH} or the book \cite{PGMRMY}.
	We provide here some details and notations on Dunkl analysis.
	
	For every root $\alpha\in \Sigma$, let $\sigma_\alpha(X)=X-2\,\frac{\langle \alpha,X\rangle}{\langle \alpha,\alpha\rangle}\,\alpha$. 
	The Weyl group $W$ associated to the root system is generated by the reflection maps $\sigma_\alpha$.
	
	A function $k: \Sigma \to \R$
	is called a multiplicity function if it is invariant under the action of $W$
	on $\Sigma.$
	
Let $\partial_\xi$ be the derivative in the direction of $\xi\in\R^d$.  
	The Dunkl operators indexed by $\xi$ are then given by
	\begin{align*}
	T_\xi(k)\,f(X)&=\partial_\xi\,f(X)+\sum_{\alpha\in \Sigma_+}\,k(\alpha)\,\alpha(\xi)\,\frac{f(X)-f(\sigma_\alpha\,X)}{\langle \alpha,X\rangle}.
	\end{align*}
	The $T_\xi$'s, $\xi\in\R^d$, form a commutative family.

	For fixed $ Y\in\R^d$, the Dunkl kernel $E_k(\cdot,\cdot)$ is then the only real-analytic solution to the system
	\begin{align*}
\left.T_\xi(k)\right\vert_X\,E_k(X,Y)
	=\langle \xi,Y\rangle\, E_k(X,Y),~\forall\xi\in\R^d
	\end{align*}
	with $E_k(0,Y)=1$.  In fact, $E_k$ extends to a holomorphic function  on $\C^d\times \C^d$.

	Its $W$-invariant version $E^W_k(X,\lambda)$ is called  a Bessel function of Dunkl type (see \cite[p.{} 57]{PGMRMY}) or a spherical function $\psi_\lambda(X)$ of type $\Sigma$ (refer to \cite{Sawyer}).
	In this paper we use the latter terminology and notation.  We have
	\begin{align*}
	\psi_\lambda(X) =E^W_k(X,\lambda)=\frac{1}{|W|}\sum_{w\in w}\,E_k(w\cdot X,\lambda)
	\end{align*}
	and $\psi_\lambda(X)$ is the only real-analytic solution of the system
	\begin{align*}
	\left.p(T_{{\bf e}_1},\dots,T_{{\bf e}_d})(k)\right\vert_X\,
	\psi_\lambda(X)
	=p(\lambda)\,\psi_\lambda(X),
		\qquad
	~\forall \lambda \in\R^d
	\end{align*}
	for every Weyl-invariant polynomial $p$ (here ${\bf e}_1$, \dots, ${\bf e}_d$ represent the standard basis on $\R^d$).

Let $\omega_k(X):=
\prod_{\alpha\in \Sigma^+}\,\vert\langle \alpha,X\rangle\vert^{2\,k(\alpha)}
$
be the Dunkl weight function  on $\R^d$.
Recall that the 
Dunkl transform
of a $W$-invariant function $f$
on $\R^d$
\begin{align*}
\hat f(\lambda):= c_k^{-1}
\int f(x) \psi_{-i\lambda}(X)
\omega_k(X)dX,\qquad \lambda\in \R^d,
\end{align*}
plays the role of the spherical Fourier transform in $W$-invariant Dunkl analysis.	
(here the constant  $c_k$ is the Macdonald--Mehta--Selberg integral.)

In \cite[Conjecture 18]{PGPS1}, we made the following conjecture on the growth of  spherical functions $\psi_\lambda(x)$
of type $\Sigma$. 
We use the  Cartan algebra notation $\a=\R^d$. Then $\a^+$
denotes the open positive Weyl chamber with respect to a system $\Sigma^+$  of positive roots.

\begin{conjecture}\label{C}
If $\lambda$, $X\in\overline{\a^+}$, then
\begin{align*}
\psi_\lambda(e^X)
\asymp\frac{e^{\lambda(X)}}{\prod_{\alpha>0}\,(1+\alpha(X)\,\alpha(\lambda))^{k(\alpha)}}.
\end{align*}

For the root system $A_n$ on $\R^d$, $d \geq n$, and multiplicity $k(\alpha)=k>0$, this becomes
\begin{align}
\psi_\lambda(e^X)
\asymp\frac{e^{\lambda(X)}}{\prod_{i<j\leq n+1}\,(1+(x_i-x_j)\,(\lambda_i-\lambda_j))^k},
\qquad
\lambda, X\,\in\overline{\a^+}
\label{CC}
\end{align}
(the underlying constants here only depend on $k$).
\end{conjecture}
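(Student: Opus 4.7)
The plan is to prove the conjectured equivalence \eqref{CC} by combining a suitable integral representation of $\psi_\lambda(e^X)$ for type $A_n$ with careful Laplace-type asymptotics. The starting point is a Rösler-type formula
\[
\psi_\lambda(e^X)=\int_{C(X)} e^{\langle\lambda,\xi\rangle}\,d\mu_{k,X}(\xi),
\]
where $C(X)$ is the convex hull of the Weyl orbit $W\cdot X$ and $\mu_{k,X}$ is a probability measure whose density relative to Lebesgue measure on $C(X)$ is explicit in terms of $\omega_k$ and vanishes on the boundary of $C(X)$.

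For the upper bound, Kostant convexity immediately yields $\psi_\lambda(e^X)\le e^{\lambda(X)}$. To refine this into the claimed product, one localizes the integral near the dominant vertex $\xi=X$ and introduces boundary-normal coordinates along the $\binom{n+1}{2}$ positive root directions. In these coordinates the integral decouples into a product of one-dimensional Laplace integrals of the form $\int_0^{c}t^{k-1}e^{-\eta t}\,dt\asymp c^k(1+c\eta)^{-k}$, with $c$ and $\eta$ identified respectively as $x_i-x_j$ and $\lambda_i-\lambda_j$. A separate estimate shows that the contribution from regions away from the vertex, where $e^{\langle\lambda,\xi\rangle}$ loses a full exponential factor, is subdominant.

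For the lower bound, one restricts integration to a thin slab $\Omega\subset C(X)$ around $\xi=X$, of width proportional to $(1+(x_i-x_j)(\lambda_i-\lambda_j))^{-1}$ transverse to each positive root. On $\Omega$, both the density of $\mu_{k,X}$ and the Laplace exponential stay comparable to their maximum values up to universal constants, and a direct volume computation yields the matching lower bound. An alternative route is induction on the multiplicity $k$ via shift operators which raise $k$ by integer steps at the cost of introducing precisely the missing polynomial factor in each positive root, combined with analytic continuation in $k$ to cover all real $k\ge 0$.

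The principal obstacle will be uniformity of the implicit constants as $X$ or $\lambda$ approaches a wall of $\overline{\a^+}$. At a wall, the vertex $X$ of $C(X)$ becomes non-generic, the convex hull degenerates, and the clean factorization of the Laplace integral over the positive roots breaks down; one must then glue the $A_n$ estimate with the analogous estimates for the parabolic subsystems stabilizing the wall, via a careful induction on the rank. A secondary difficulty is tracking constants precisely enough to confirm they depend only on $k$ and not on the ambient dimension $d$, which uses the reduction to essential coordinates noted just after the conjecture.
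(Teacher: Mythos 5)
Your starting point is not available in the form you assume. For general $k>0$ the representing measure $\mu_{k,X}$ of the $W$-invariant Dunkl kernel on the convex hull of $W\cdot X$ does exist (by R\"osler's positivity of the intertwining operator), but its density is \emph{not} explicit; essentially nothing concrete is known about it beyond rank one and the complex case $k=1$ (the paper itself notes that little is known explicitly about $V_k$). What \emph{is} explicit for type $A$ is Sawyer's iterative formula \eqref{iter}, an $n$-fold integral over interlacing variables with the rank-$(n-1)$ function $\psi_{\lambda_0}(e^Y)$ under the integral sign, and that is what the paper uses, recasting \eqref{CC} as the two-sided estimate \eqref{eq:I(n)} for the integral $I^{(n)}$ and inducting on the rank. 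Even granting some representation, your central step --- introducing ``boundary-normal coordinates along the $\binom{n+1}{2}$ positive root directions'' so the integral ``decouples into a product of one-dimensional Laplace integrals'' --- cannot work as stated: for $n\ge 2$ the number of positive roots exceeds the dimension $n$ of the polytope $C(X)$, so the positive roots are linearly dependent and no such coordinate system exists; the same objection defeats the ``thin slab of prescribed width transverse to each positive root'' in your lower bound. The paper's induction is exactly the device that replaces this impossible decoupling: one isolates the largest \emph{simple} root $\alpha_m=x_m-x_{m+1}$, truncates the $y_m$-integration to $[M_m,x_m]$ (Proposition \ref{truncated} and its analogue inside the proof of Theorem \ref{main}), observes that every ``mixed'' factor with $i\le m<j$ is $\asymp\alpha_m$ on that range, and thereby factors $I^{(n)}$ into an $A_m$-type and an $A_{n-m}$-type integral handled by the induction hypothesis, with a separate argument showing the discarded piece is dominated.

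Your fallback via shift operators is also not a proof: Opdam-type shift operators do raise $k$ by integers, but a two-sided estimate with uniform constants does not propagate through the application of a differential operator, and ``analytic continuation in $k$'' cannot transfer inequalities to non-integer $k$. Finally, the wall behaviour is handled far more simply than you anticipate: the estimate is proved for $X\in\a^+$ with constants depending only on $k$ and $n$, and both sides of \eqref{CC} are continuous, so the estimate extends to $\overline{\a^+}$ by continuity; no gluing with parabolic subsystems is required.
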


The notation $f\asymp g$ in a domain $D$ means that there exists $C_1>0$ and $C_2>0$ such that
$C_1\,g(x)\leq f(x)\leq C_2\,g(x)$ with $C_1$ and $C_2$ independent of $x\in D$. 
Recall that for the root system $A_n$ on $\R^{n+1}$, the positive Weyl chamber is defined by 
$\a^+=\{X\in \R^{n+1}\,|\, x_1>x_2> \ldots >x_{n+1}$\}.

\begin{remark}
This conjecture includes the cases of the symmetric spaces of noncompact type ${\bf POS}_1(n,\F)$,
the positive definite matrices of determinant 1 over $\F$ where $\F=\R$ (the real numbers with $k=1/2$), $\F=\C$ (the complex numbers with $k=1$), $\F=\H$ (the quaternion numbers with $k=2$ or $\F={\bf O}$ (the Octonions with $k=4$) when $n=3$.
In \cite{PGPS1}, we proved the conjecture for the root system $A_n$ in the complex case $k=1$.
\end{remark}

The main tool of the proof
of Conjecture \ref{C} for root systems $A_n$ is
the following iterative formula for the spherical functions of type $A$,
proven in \cite{Sawyer}.
Here we do not assume that the elements of the Lie algebra have trace 0. Here the Cartan subalgebra $\a$ for the root system $A_n$ is isomorphic to $\R^{n+1}$. For $\lambda\in\a =\R^{n+1}$
and $X\in \a^+$,we have
\begin{align}
\psi_\lambda(e^X)&=e^{\lambda(X)}\ \hbox{if $n=1$ and}\nonumber\\
\psi_\lambda(e^X)&=\frac{\Gamma(k\,(n+1))}{(\Gamma(k))^{n+1}}\,e^{\lambda_{n+1}\,\sum_{r=1}^{n+1}\,x_r}\,\pi(X)^{1-2\,k}\,\int_{x_{n+1}}^{x_n}\,\cdots\,\int_{x_2}^{x_1}
\,\psi_{\lambda_0}(e^Y)\label{iter}
\\&\left[\prod_{i=1}^n\,\left(\prod_{j=1}^i\,(x_j-y_i)\,\prod_{j=i+1}^{n+1}\,(y_i-x_j)\right)\right]^{k-1}
\,\prod_{i<j\leq n}\,(y_i-y_j)\,dy_1\cdots dy_n\nonumber
\end{align}
where $\lambda_0(U)=\sum_{r=1}^n\,(\lambda_r-\lambda_{n+1})\,u_k$
and $\pi(X)= \prod_{i<j\leq n+1}\,(x_i-x_j) $.\\

\begin{remark}
Formula \eqref{iter} concerns the action of the root system $A_{n}$ on $\R^{n+1}$.  If we assume $\sum_{k=1}^{n+1}\,x_k=0=\sum_{k=1}^{n+1}\,\lambda_k$, we have then the action of the root system $A_{n}$ on $\R^{n}$.  We can also consider the action of $A_{n}$ on any $\R^m$ with $m\geq n$ by  deciding on which $n+1$ entries $x_k$, the roots  act.  These considerations do not affect the results of this article.
\end{remark}

The Dunkl heat kernel $p_t(X,Y)$ is given as
\begin{align} 
p_t(X,Y)
&=\frac1{2^{\gamma+d/2} c_k}\,t^{-\frac{d}{2}-\gamma}\,e^{\frac{-|X|^2-|Y|^2}{4t}}\,E_k\left(X,\frac{Y}{2t}\right),\label{heatSpher}
\end{align}
where $\gamma= \sum_{\alpha>0} k(\alpha)$.
Establishing estimates of the Dunkl heat kernel is equivalent to estimating the Dunkl kernel as demonstrated by equation \eqref{heatSpher}.

In \cite[Lemma 4.5]{Roesler}, it is shown that
\begin{align*}
&\int_{\R^d}\,p_t(X,Y)\,\omega_k(Y)\,dY=1\\
&\left.\Delta_k\right\vert_X\,p_t(X,Y)
=\frac{\partial~}{\partial t}\,p_t(X,Y)
\end{align*}
where the Dunkl Laplacian $\Delta_k$ equals
\begin{align*}
\Delta_k\,f(X)&=\sum_{i=1}^d\,T_{{\bf e}_i}^2\,f(X)\\
&=\Delta\,f(X)+2\,\sum_{\alpha\in\Sigma^+}\,k(\alpha)\,\left[\frac{\langle \alpha,\nabla f(X)\rangle}{\langle \alpha,X\rangle}-\frac{f(X)-f(\sigma_\alpha\,X)}{\langle \alpha,X\rangle^2}\right].
\end{align*}
Here $\Delta$ and $\nabla$ denote the regular Laplacian and gradient.
  
The formula \eqref{heatSpher} remains true for the $W$-invariant kernels  $p^W_t$  and   $E^W$ and translates  
in a similar relationship between the spherical function $\psi_\lambda$ and the heat kernel $p_t^W(X,Y)$:
\begin{align}\label{heatSpher1}
p_t^W(X,Y)&=\frac1{2^{\gamma+d/2} c_k} \,t^{-\frac{d}{2}-\gamma} \,e^{\frac{-|X|^2-|Y|^2}{4t}}\,\psi_X\left(\frac{Y}{2t}\right).
\end{align}

In Section \ref{comp}, we prove the Conjecture \ref{C}
for the root system $A_n$, with an arbitrary
multiplicity $k>0$, i.e.{} we prove the formula \eqref{CC}
 providing exact estimates for the spherical functions  $\psi_\lambda(X)$ in the two variables $X$, $\lambda$  when $\lambda$ is real.

In Section \ref{appHeat}, we apply the sharp estimates
\eqref{CC} of the spherical functions $\psi_\lambda(X)$
to the $W$-invariant Dunkl heat kernel $p_t(X,Y)$ for the root system $A_n$, with an arbitrary
multiplicity $k>0$. In the Theorem  \ref{heat}, we obtain
sharp estimates of $p_t(X,Y)$ in three variables $t,X,Y$.

Next, in Sections \ref{appNewton} and \ref{appFract}, we apply the Theorem  \ref{heat} to the $W$-invariant Dunkl Newton kernel and to the  $W$-invariant $s$-stable semigroups, respectively.  In all cases, we obtain sharp estimates.

\section{ Proof of the Conjecture in the case $A_n$.}\label{comp}

We will assume from now on that $X\in\a^+$ and $\lambda\in\overline{\a^+}$.

\begin{Note}
We will write $f(x)\lesssim g(x)$ ($f(x)\gtrsim g(x)$) for $x\in D$ if there exists a constant $C>0$ independent of $x$ such that $f(x)\leq C\,g(x)$ ($f(x)\geq C\,g(x)$) for all $x\in D$.
We will use the notation $M_k=(x_k+x_{k+1})/2$.
\end{Note}

\begin{remark}
Suppose $i<j$. We will use repeatedly the fact that the functions
\begin{align*}
\frac{x}{1+(\lambda_i-\lambda_j)\,x} \qquad \text{and} \qquad 
\frac{x}{(1+(\lambda_i-\lambda_j)\,x)^{k}}, \quad  k\leq 1
 \end{align*}
 are increasing functions of $x$. A feature of our proofs will be the distinction between the cases $0<k\leq 1$ and $k>1$.
\end{remark}

\begin{proposition}\label{induc}
Conjecture \ref{C} is equivalent to
\begin{align}\label{eq:I(n)}
I^{(n)}\asymp\frac{\pi(X)^{2\,k-1}}{\prod_{i<j\leq n+1}\,((1+(\lambda_i-\lambda_j)(x_i-x_j))^{k}}
\end{align}
where 
\begin{align*}
I^{(n)}&=
\int_{x_{n+1}}^{x_n}\,\dots\int_{x_2}^{x_1}
\,e^{-\sum_{i=1}^n\,(\lambda_i-\lambda_{n+1})\,(x_i-y_i)}
\\&\qquad
\,\left(\prod_{i\leq j\leq n}\,(x_i-y_j)\,\prod_{i< j\leq n+1}\,(y_i-x_j)\right)^{k-1}
\\&\qquad
\,\prod_{i<j\leq n}\,\frac{y_i-y_j}{(1+(\lambda_i-\lambda_j)(y_i-y_j))^{k}} 
\,dy_1\dots dy_n.
\end{align*}
\end{proposition}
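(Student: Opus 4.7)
The plan is to proceed by induction on $n$, using the iterative formula \eqref{iter} as the central identity. The base case $n=1$ is trivial since then $\psi_\lambda(e^X)=e^{\lambda(X)}$, matching \eqref{CC} with an empty denominator. For the inductive step, I would assume the conjectured estimate for $A_{n-1}$ acting on $\R^n$ and substitute
\begin{align*}
\psi_{\lambda_0}(e^Y)\asymp\frac{e^{\lambda_0(Y)}}{\prod_{i<j\leq n}(1+(y_i-y_j)(\lambda_i-\lambda_j))^{k}}
\end{align*}
into the integrand of \eqref{iter}. This is legitimate: the integration region in \eqref{iter} forces $y_1>y_2>\cdots>y_n$, so $Y\in\overline{\a^+}$, and $\lambda_0=(\lambda_1-\lambda_{n+1},\ldots,\lambda_n-\lambda_{n+1})$ lies in $\overline{\a^+}$ for $A_{n-1}$ whenever $\lambda\in\overline{\a^+}$ for $A_n$. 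Note also that $(\lambda_0)_i-(\lambda_0)_j=\lambda_i-\lambda_j$, so the induction hypothesis has exactly the form written above.

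The remainder of the argument is bookkeeping on products and exponents. For the polynomial factors, the $[\,\cdot\,]^{k-1}$ bracket in \eqref{iter} coincides with the $(k-1)$-power product in $I^{(n)}$ after the trivial reindexing
\begin{align*}
\prod_{i\leq j\leq n}(x_i-y_j)=\prod_{j\leq i\leq n}(x_j-y_i),
\end{align*}
while the Vandermonde factor $\prod_{i<j\leq n}(y_i-y_j)$ from \eqref{iter} combines with the denominator supplied by the induction hypothesis to produce precisely the ratios $(y_i-y_j)/(1+(\lambda_i-\lambda_j)(y_i-y_j))^{k}$ appearing in $I^{(n)}$. For the exponential piece, a direct manipulation yields
\begin{align*}
\lambda_{n+1}\sum_{r=1}^{n+1}x_r+\lambda_0(Y)=\lambda(X)-\sum_{r=1}^n(\lambda_r-\lambda_{n+1})(x_r-y_r),
\end{align*}
which exactly reproduces the exponential weight inside $I^{(n)}$ and extracts $e^{\lambda(X)}$ as an overall prefactor.

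Collecting the pieces gives $\psi_\lambda(e^X)\asymp e^{\lambda(X)}\,\pi(X)^{1-2k}\,I^{(n)}$, so Conjecture \ref{C} for $A_n$ is equivalent to \eqref{eq:I(n)}. No real obstacle arises in this reformulation, since the argument is purely an algebraic matching of \eqref{iter} with the target estimate \eqref{CC}. The genuine analytic difficulty is postponed to actually establishing \eqref{eq:I(n)}, which I expect to require splitting the integration region around the midpoints $M_k=(x_k+x_{k+1})/2$ and treating the ranges $0<k\leq 1$ and $k>1$ separately, exactly as foreshadowed by the notation and the remark immediately preceding the proposition.
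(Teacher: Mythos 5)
Your proposal is correct and follows exactly the paper's route: the paper's own proof is a one-line observation that $I^{(n)}$ is a constant multiple of $e^{-\lambda(X)}\,\pi(X)^{2k-1}\,\psi_\lambda(e^X)$ obtained by substituting the conjectured asymptotics of $\psi_{\lambda_0}(e^Y)$ into \eqref{iter}, which is precisely the bookkeeping you carry out in detail (the exponential identity, the reindexing of the $(k-1)$-power products, and the absorption of the Vandermonde factor into the induction hypothesis all check out).
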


\begin{proof}
The  integral $I^{(n)}$ corresponds to a constant multiple of $e^{-\lambda(X)}\,\pi(X)^{2\,k-1}\,\psi_\lambda(e^X)$ in which we have replaced $\psi_{\lambda_0}(e^Y)$ in \eqref{iter} by its asymptotic expression conjectured in \eqref{CC}.
\end{proof}

We start by two technical results.

\begin{lemma}\label{A}
For $k>0$ and $x\geq0$, we have
\begin{align*}
\int_{0}^x\,u^{k-1}\,e^{-u} \,du\asymp\left(\frac{x}{1+x}\right)^{k}.
\end{align*}
\end{lemma}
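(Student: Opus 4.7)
The right-hand side has the convenient feature that $\bigl(\tfrac{x}{1+x}\bigr)^k$ behaves like $x^k$ for small $x$ and tends to $1$ for large $x$, so the natural plan is to split the proof into two regimes, $0\le x\le 1$ and $x\ge 1$, and verify that the integral $\int_0^x u^{k-1}e^{-u}\,du$ has the same two-sided asymptotics on each piece.

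On the regime $0\le x\le 1$, I would observe first that $1+x\in[1,2]$, so $(x/(1+x))^k\asymp x^k$ with constants depending only on $k$. For the integral, the exponential factor is harmless: since $u\le x\le 1$ we have $e^{-1}\le e^{-u}\le 1$, which gives
\begin{align*}
\frac{e^{-1}}{k}\,x^{k}=e^{-1}\int_0^x u^{k-1}\,du\;\le\;\int_0^x u^{k-1}e^{-u}\,du\;\le\;\int_0^x u^{k-1}\,du=\frac{x^k}{k}.
\end{align*}
Combining these two comparisons settles the lemma for $x\in[0,1]$.

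On the regime $x\ge 1$, the target $(x/(1+x))^k$ lies in the compact interval $[2^{-k},1]$, so it is bounded above and below by positive constants depending only on $k$. For the integral, the upper bound is immediate from monotonicity and the definition of the Gamma function:
\begin{align*}
\int_0^x u^{k-1}e^{-u}\,du\;\le\;\int_0^\infty u^{k-1}e^{-u}\,du=\Gamma(k).
\end{align*}
The lower bound follows by restricting the domain of integration to $[0,1]$:
\begin{align*}
\int_0^x u^{k-1}e^{-u}\,du\;\ge\;\int_0^1 u^{k-1}e^{-u}\,du\;\ge\;\frac{e^{-1}}{k},
\end{align*}
a strictly positive constant depending only on $k$.

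Both regions thus yield two-sided bounds with constants depending only on $k$, and gluing them at $x=1$ gives the desired $\asymp$. There is no real obstacle here; the only point to watch is that all the implied constants are independent of $x$ but allowed to depend on $k$, which is fine since the statement treats $k$ as fixed.
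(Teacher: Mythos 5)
Your proof is correct and follows exactly the same strategy as the paper's: split at $x=1$, use $e^{-1}\le e^{-u}\le 1$ to reduce to $\int_0^x u^{k-1}\,du\asymp x^k\asymp (x/(1+x))^k$ on $[0,1]$, and sandwich the integral between $\int_0^1 u^{k-1}e^{-u}\,du$ and $\Gamma(k)$ for $x\ge 1$. You have merely written out the details the paper leaves implicit.
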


\begin{proof}
The result is clearly true if $0\leq x<1$ (use $e^{-1}\le  e^{-x}\le 1$  and integrate).  If $x\geq1$ then
\begin{align*}
\int_{0}^1\,u^{k-1}\,e^{-u} \,du\leq \int_{0}^x\,u^{k-1}\,e^{-u} \,du<\int_{0}^\infty\,u^{k-1}\,e^{-u} \,du
\end{align*}
and the result follows.
\end{proof}

\begin{proposition}\label{truncated}
Assume that $\gamma=x_n-x_{n+1}$ is the largest positive root and let
\begin{align*}
I_1&=
\int_{M_n}^{x_n}\,\dots\int_{x_2}^{x_1}
\,e^{-\sum_{i=1}^n\,(\lambda_i-\lambda_{n+1})\,(x_i-y_i)}
\,\left(\prod_{i\leq j\leq n}\,(x_i-y_j)\,\prod_{i< j\leq n+1}\,(y_i-x_j)\right)^{k-1}
\\&\qquad
\,\prod_{i<j\leq n}\,\frac{y_i-y_j}{(1+(\lambda_i-\lambda_j)(y_i-y_j))^{k}} 
\,dy_1\dots dy_n.
\end{align*}
Then $I_1\asymp I^{(n)}$.
\end{proposition}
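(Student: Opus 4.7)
The inequality $I_1\leq I^{(n)}$ is immediate from positivity of the integrand, so only the reverse bound $I^{(n)}\lesssim I_1$ requires proof. Writing $I_2:=I^{(n)}-I_1$ for the contribution from $y_n\in(x_{n+1},M_n)$, I will prove $I_2\lesssim I_1$ \emph{pointwise} in $(y_1,\dots,y_{n-1})$ (with $x_{i+1}<y_i<x_i$) and then integrate. Let $F=F(y_1,\dots,y_n)$ denote the integrand of $I^{(n)}$, set $\mu:=\lambda_n-\lambda_{n+1}\geq 0$ and $h_i(u):=u/(1+(\lambda_i-\lambda_n)u)^k$ for $i<n$, and collect in a quantity $A=A(y_1,\dots,y_{n-1})$ all factors of $F$ that do not involve $y_n$.

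\emph{Step 1 (estimate of $\int_{x_{n+1}}^{M_n}F\,dy_n$).} On the $I_2$-region one has $y_n-x_{n+1}<\gamma/2$ while $x_i-x_{n+1}\geq\gamma$ for $i\leq n$ and $y_i-x_{n+1}>\gamma$ for $i<n$ (the latter since $y_i>x_{i+1}\geq x_n$). Thus $x_i-y_n\in((x_i-x_{n+1})/2,\,x_i-x_{n+1})$ and $y_i-y_n\in((y_i-x_{n+1})/2,\,y_i-x_{n+1})$, which gives $(x_i-y_n)^{k-1}\asymp(x_i-x_{n+1})^{k-1}$ and, by the bounded-ratio property of $h_i$, also $h_i(y_i-y_n)\asymp h_i(y_i-x_{n+1})$. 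With $(x_n-y_n)^{k-1}\asymp\gamma^{k-1}$ and the substitution $t=y_n-x_{n+1}$, one obtains
\[\int_{x_{n+1}}^{M_n}F\,dy_n\,\asymp\,B\,\gamma^{k-1}\int_{0}^{\gamma/2}t^{k-1}e^{-\mu(\gamma-t)}\,dt,\quad B:=A\prod_{i<n}(x_i-x_{n+1})^{k-1}\,h_i(y_i-x_{n+1}).\]

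\emph{Step 2 (lower bound on $\int_{M_n}^{x_n}F\,dy_n$).} Restrict to the subregion $y_n\in(M_n,M_n+\gamma/4)$, on which $x_n-y_n\in(\gamma/4,\gamma/2)$. Writing $x_i-y_n=(x_i-x_n)+(x_n-y_n)$ and $y_i-y_n=(y_i-x_n)+(x_n-y_n)$, one checks that these remain comparable to $x_i-x_{n+1}=(x_i-x_n)+\gamma$ and $y_i-x_{n+1}=(y_i-x_n)+\gamma$ respectively, within a factor in $[1/4,1]$, \emph{uniformly in the possibly very small values of $x_i-x_n$ and $y_i-x_n$}. With $(x_n-y_n)^{k-1}\asymp(y_n-x_{n+1})^{k-1}\asymp\gamma^{k-1}$, the same quantity $B$ reappears, and
\[\int_{M_n}^{M_n+\gamma/4}F\,dy_n\,\asymp\,B\,\gamma^{2(k-1)}\int_{\gamma/4}^{\gamma/2}e^{-\mu s}\,ds,\]
a lower bound for $\int_{M_n}^{x_n}F\,dy_n$.

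\emph{Step 3 (one-variable comparison and conclusion).} It remains to verify
\[\int_{0}^{\gamma/2}t^{k-1}e^{-\mu(\gamma-t)}\,dt\,\lesssim\,\gamma^{k-1}\int_{\gamma/4}^{\gamma/2}e^{-\mu s}\,ds.\]
This follows by a case split on $\mu\gamma$: for $\mu\gamma\leq 1$ the exponentials are $\asymp 1$ and both sides are $\asymp\gamma^{k}$; for $\mu\gamma>1$ the left side equals $e^{-\mu\gamma}\int_{0}^{\gamma/2}t^{k-1}e^{\mu t}\,dt$ with inner integral $\asymp\gamma^{k-1}e^{\mu\gamma/2}/\mu$ (dominated near $t=\gamma/2$), whereas the right side is $\asymp\gamma^{k-1}e^{-\mu\gamma/4}/\mu$, so the ratio is $e^{-\mu\gamma/4}\leq 1$. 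Integrating the pointwise bound in $(y_1,\dots,y_{n-1})$ then gives $I_2\lesssim I_1$. The main obstacle is the uniformity claim in Step 2: the subregion of $I_1$ must be separated from both endpoints $x_n$ and $x_{n+1}$ by a fixed fraction of $\gamma$, to guarantee that $(x_i-y_n)^{k-1}$ and $h_i(y_i-y_n)$ stay controlled irrespective of how close $x_i$ or $y_i$ may come to $x_n$.
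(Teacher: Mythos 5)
Your argument is correct, and it uses the same basic decomposition as the paper: write $I^{(n)}=I_1+I_2$ with $I_2$ the contribution of $y_n\in(x_{n+1},M_n)$, and compare the two $y_n$-integrals pointwise in $(y_1,\dots,y_{n-1})$. The execution of that comparison, however, is genuinely different. The paper uses the hypothesis that $\gamma$ is the largest simple root to replace \emph{every} $y_n$-dependent factor by the corresponding power of $\gamma$, arriving at matching explicit bounds $\tilde I_1\gtrsim\gamma^{(n+1)k-1}\,e^{-(\lambda_n-\lambda_{n+1})\gamma/2}\,\prod_{i<n}(1+(\lambda_i-\lambda_n)\gamma)^{-k}\gtrsim\tilde I_2$; this forces a case split $0<k\leq1$ versus $k>1$, because the monotonicity substitutions (e.g.\ $y_i-y_n\mapsto x_{i+1}-y_n$) go in opposite directions depending on the sign of $k-1$. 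You instead keep the exact $(y_1,\dots,y_{n-1})$-dependence in the common factor $B$, use only the two-sided dilation bound $h_i(cu)\asymp h_i(u)$ for $c\in[\tfrac14,1]$ (valid for every $k>0$), and reduce everything to the clean one-variable inequality of Step~3, whose verification in the two regimes $\mu\gamma\leq1$ and $\mu\gamma>1$ is correct (including for $0<k<1$, where the singularity of $t^{k-1}$ at $0$ is still dominated by the endpoint $t=\gamma/2$). This buys you two things: no case split on $k$, and --- since you only need $y_n-x_{n+1}\leq\gamma/2\leq(x_i-x_{n+1})/2$ on the $I_2$-region and $x_n-y_n\geq\gamma/4$ on the chosen sub-window of the $I_1$-region --- no use of the largest-root hypothesis at all, so your proof in fact yields the truncation statement unconditionally. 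The one point you rightly flagged as delicate, uniformity in Step~2 when $x_i-x_n$ or $y_i-x_n$ is small, is handled correctly by keeping $x_n-y_n$ bounded below by a fixed fraction of $\gamma$.
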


\begin{proof}
Let $I_2=I^{(n)}-I_1$.
In $I_1$ and $I_2$, consider only the corresponding integral in $y_n$, calling the resulting expressions $\tilde{I}_1$ and  $\tilde{I}_2$. 
Observing that $y_n-x_{n+1}\asymp\gamma$ for $y_n\in[x_n,M_n]$, we have
\begin{align*}
\tilde{I}_1&\asymp\gamma^{k-1}\,\int_{M_n}^{x_n}
\,e^{-(\lambda_n-\lambda_{n+1})\,(x_n-y_n)}
\,\left(\prod_{i\leq n}\,(x_i-y_n)\right)^{k-1}
\\&\qquad\qquad\qquad
\,\prod_{i<n}\,\frac{y_i-y_n}{(1+(\lambda_i-\lambda_n)(y_i-y_n))^{k}}
dy_n.
\end{align*}

If $0<k\leq 1$ then, 
\begin{align*}
\tilde{I}_1&\gtrsim\,e^{-(\lambda_n-\lambda_{n+1})\,\gamma/2}\gamma^{k-1}\,\int_{M_n}^{x_n}
\,(x_1-y_n)^{k-1}\,\prod_{i=2}^n\,(x_i-y_n)^{k-1}
\,\prod_{i<n}\,\frac{x_{i+1}-y_n}{(1+(\lambda_i-\lambda_n)\,\gamma)^{k}}
dy_n\\
&\gtrsim\gamma^{k-1}\,e^{-(\lambda_n-\lambda_{n+1})\,\gamma/2}\,\int_{M_n}^{x_n}
\,\prod_{i<n}\,\frac{(x_{i+1}-y_n)^{k}}{(1+(\lambda_i-\lambda_n)\,\gamma)^{k}} \,(x_1-y_n)^{k-1}\,
dy_n\\
&\gtrsim\gamma^{k-1}
\,\frac{e^{-(\lambda_n-\lambda_{n+1})\,\gamma/2}}{\prod_{i<n}\,(1+(\lambda_i-\lambda_n)\,\gamma)^{k}}
\,\int_{M_n}^{x_n}\,(x_n-y_n)^{(n-1)\,k} \,(x_1-y_n)^{k-1}\,
dy_n\\
&\gtrsim\gamma^{(n+1)\,k-1}\,\frac{e^{-(\lambda_n-\lambda_{n+1})\,\gamma/2}}{\prod_{i<n}\,(1+(\lambda_i-\lambda_n)\,\gamma)^{k}}.
\end{align*}
Indeed, if $n>1$ then $(x_1-y_n)^{k-1}\gtrsim \gamma^{k-1}$ and the rest can easily be integrated. If $n=1$, then
\begin{align*}
\int_{M_n}^{x_n}\,(x_n-y_n)^{(n-1)\,k} \,(x_1-y_n)^{k-1}\,
dy_n=\int_{M_n}^{x_n}\,(x_n-y_n)^{\,k-1} \,dy_n\asymp \gamma^{k}.
\end{align*}

If $k>1$, we have
\begin{align*}
\tilde{I}_1&\gtrsim \gamma^{k-1}\,e^{-(\lambda_n-\lambda_{n+1})\,\gamma/2}\,\int_{M_n}^{x_n}
\,\prod_{i=1}^{n-1}\,(x_i-y_n)^{k-1}
\,\prod_{i<n}\,\frac{y_i-y_n}{(1+(\lambda_i-\lambda_n)(y_i-y_n)^{k}}
\\&\qquad\qquad\qquad\qquad\qquad
\,(x_n-y_n)^{k-1}
dy_n\\
&\gtrsim \gamma^{k-1}\,e^{-(\lambda_n-\lambda_{n+1})\,\gamma/2}\,\int_{M_n}^{x_n}
\,\left(\prod_{i<n}\,\frac{y_i-y_n}{1+(\lambda_i-\lambda_n)(y_i-y_n)} \right)^{k}
\,(x_n-y_n)^{k-1}\,dy_n\\
&\gtrsim  \gamma^{k-1}\,\frac{e^{-(\lambda_n-\lambda_{n+1})\,\gamma/2}}{\prod_{i<n}\,(1+(\lambda_i-\lambda_n)\,\gamma)^{k}}
\,\int_{M_n}^{x_n}\prod_{i<n}\,(y_i-y_n)^{k}\,\,(x_n-y_n)^{k-1}\,dy_n\\
&\gtrsim  \gamma^{k-1}\,\frac{e^{-(\lambda_n-\lambda_{n+1})\,\gamma/2}}{\prod_{i<n}\,(1+(\lambda_i-\lambda_n)\,\gamma)^{k}}
\,\int_{M_n}^{x_n}\,(x_n-y_n)^{n\,k-1}\,dy_n\\
&\asymp \gamma^{(n+1)\,k-1}\,\frac{e^{-(\lambda_n-\lambda_{n+1})\,\gamma/2}}{\prod_{i<n}\,(1+(\lambda_i-\lambda_n)\,\gamma)^{k}}.
\end{align*}

On the other hand for $k>0$, observing that $x_i-y_n\asymp\gamma$ and $y_i-y_n\asymp \gamma$ for $y_n\in[M_n,x_{n+1}]$, we have
\begin{align*}
\tilde{I}_2&\asymp\gamma^{n\,(k-1)}\,\prod_{i<n}\,\frac{\gamma}{(1+(\lambda_i-\lambda_n)\gamma)^{k}} \,\int_{x_{n+1}}^{M_n}
\,e^{-(\lambda_n-\lambda_{n+1})\,(x_n-y_n)}
\,(y_n-x_{n+1})^{k-1}\,dy_n\\\\
&\lesssim \gamma^{n\,(k-1)+n-1}\,\frac{e^{-(\lambda_n-\lambda_{n+1})\,\gamma/2}}{\prod_{i<n}\,(1+(\lambda_i-\lambda_n)\gamma)^{k}} \,\int_{x_{n+1}}^{M_n}\,(y_n-x_{n+1})^{k-1}\,dy_n\\
&\asymp\gamma^{n\,(k-1)+n-1+k}\,\frac{e^{-(\lambda_n-\lambda_{n+1})\,\gamma/2}}{\prod_{i<n}\,(1+(\lambda_i-\lambda_n)\gamma)^{k}}
\lesssim \tilde{I}_1
\end{align*}
which allows us to conclude.
\end{proof}

\begin{theorem}\label{main}
Conjecture \ref{C} holds for the root system $A_n$, $n\geq 1$ with root multiplicity $k>0$.
\end{theorem}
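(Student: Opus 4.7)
The plan is to prove Theorem \ref{main} by induction on the rank $n$, with the base case $n=1$ supplied by the initial formula in \eqref{iter}. For the inductive step, assume Conjecture \ref{C} holds in rank $n-1$. By Proposition \ref{induc} combined with the iterative formula \eqref{iter} and the inductive hypothesis applied to $\psi_{\lambda_0}(e^Y)$, the goal reduces to proving
\begin{align*}
I^{(n)} \asymp \frac{\pi(X)^{2k-1}}{\prod_{i<j\leq n+1}(1+(\lambda_i-\lambda_j)(x_i-x_j))^k}.
\end{align*}

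I would first reduce to the configuration of Proposition \ref{truncated}, namely that $\gamma = x_n - x_{n+1}$ realizes the maximum of the simple differences $x_i - x_{i+1}$; the remaining configurations are handled by an analogous truncation of a different coordinate. Proposition \ref{truncated} gives $I^{(n)} \asymp I_1$. On the truncated region $y_n \in [M_n, x_n]$ one has $y_n - x_{n+1} \asymp \gamma$, so the factor $(y_n - x_{n+1})^{k-1}$ becomes $\gamma^{k-1}$ and the cross-factors $(y_i - x_{n+1})^{k-1}$ for $i < n$ become controllable. I would then carry out the $y_n$-integration using Lemma \ref{A} and the monotonicity of $x/(1+(\lambda_i-\lambda_j)x)^k$ from the earlier remark, treating the cases $0 < k \leq 1$ and $k > 1$ separately, exactly mirroring the case split in the proof of Proposition \ref{truncated}. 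This should produce the factors of the target indexed by pairs $(i, n+1)$.

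The remaining $(n-1)$-fold integral in $(y_1, \ldots, y_{n-1})$ has precisely the structure of $I^{(n-1)}$ for the $A_{n-1}$ subsystem on the coordinates $(x_1, \ldots, x_n)$ with suitably shifted spectral parameters. By the inductive hypothesis applied via Proposition \ref{induc} in rank $n-1$, this integral is controlled by $\prod_{i<j\leq n}(x_i - x_j)^{2k-1} / \prod_{i<j\leq n}(1+(\lambda_i-\lambda_j)(x_i-x_j))^k$. Multiplying the two contributions and using $\pi(X) = \prod_{i<j\leq n}(x_i - x_j) \cdot \prod_{i=1}^n (x_i - x_{n+1})$ should reassemble into the target asymptotic.

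The hard part will be the combinatorial bookkeeping needed to match the target product exactly: the factors produced by the $y_n$-integration naturally involve $\gamma$ and $\lambda_n - \lambda_i$, and they must combine with the inductive output into the expected $\prod_{i=1}^n (1+(\lambda_i - \lambda_{n+1})(x_i - x_{n+1}))^k$. Maintaining uniformity of constants when several simple differences are simultaneously comparable to $\gamma$, together with the parallel treatment of $0 < k \leq 1$ versus $k > 1$, adds further technicality throughout the argument.
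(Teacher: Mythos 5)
Your overall framework (induction on the rank via Proposition \ref{induc}, truncation in the spirit of Proposition \ref{truncated}, Lemma \ref{A} for the one-dimensional integrals, and the case split $0<k\le 1$ versus $k>1$) matches the paper, and the base case $n=1$ is handled the same way. However, the core of your inductive step has a genuine gap: after truncating $y_n$ to $[M_n,x_n]$ (the setting of Proposition \ref{truncated}, where $\gamma=x_n-x_{n+1}$ is the largest gap), the integral does \emph{not} factor into a one-dimensional $y_n$-integral times an integral of type $I^{(n-1)}$. The integrand couples $y_n$ to every other variable through the factors $(x_i-y_n)^{k-1}$ and $(y_i-y_n)\,(1+(\lambda_i-\lambda_n)(y_i-y_n))^{-k}$ for $i<n$, and on $y_n\in[M_n,x_n]$ these are not comparable to any quantity independent of $y_1,\dots,y_n$: for instance $y_{n-1}-y_n$ ranges over $[0,\,x_{n-1}-M_n]$ and in particular vanishes on part of the domain. (Note that Proposition \ref{truncated} only asserts $I_1\asymp I^{(n)}$; its proof never evaluates the truncated integral, precisely because no such factorization is available in this configuration.) Consequently the $y_n$-integration cannot ``produce the factors indexed by $(i,n+1)$'' while leaving behind an $I^{(n-1)}$, and the subsequent reassembly you describe has nothing to act on.

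The paper's inductive step differs in two essential respects. First, the case analysis is on which simple difference $\alpha_m=x_m-x_{m+1}$ is the largest, and for $1\le m\le n-1$ the truncation is performed in $y_m$: because $\alpha_m$ dominates all gaps, every cross term between the blocks $\{1,\dots,m\}$ and $\{m+1,\dots,n+1\}$ (namely $x_i-y_j$, $y_i-x_j$ and $y_i-y_j$ across the blocks) is genuinely $\asymp\alpha_m$ on $y_m\in[M_m,x_m]$, so the $n$-fold integral factors into an $A_m$-integral times an $A_{n-m}$-integral, to each of which the induction hypothesis (together with Proposition \ref{truncated}) applies. Second, the remaining case $m=n$ --- exactly the configuration you chose as your main case --- cannot be treated this way, since the corresponding factorization would be $A_n\times A_0$ and yields no rank reduction; the paper disposes of it by a separate symmetry argument, rewriting the iterative formula \eqref{iter} with $\lambda_1$ and $\lambda_{n+1}$ exchanged so that ``$\alpha_n$ maximal'' becomes ``$\alpha_1$ maximal'' for the rewritten integral $J^{(n)}$. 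Both of these ideas are missing from your proposal, so the inductive step as described would not go through.
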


\begin{proof}
The result is proven using induction.  Using Proposition \ref{truncated} with $n=1$,
we have, using $u=(\lambda_1-\lambda_2)\,(x_1-y_1)$ and Lemma \ref{A},
\begin{align*}
I^{(1)}&\asymp I_1= \int_{M_1}^{x_1}\,e^{-(\lambda_1-\lambda_2)\,(x_1-y_1)}\,(x_1-y_1)^{k-1}\,(y_1-x_2)^{k-1}\,dy_1\\
&\asymp(x_1-x_2)^{k-1}\,\int_{M_1}^{x_1}\,e^{-(\lambda_1-\lambda_2)\,(x_1-y_1)}\,(x_1-y_1)^{k-1}\,dy_1\\
&=(x_1-x_2)^{k-1}\,(\lambda_1-\lambda_2)^{-k}\,\int_0^{(\lambda_1-\lambda_2)\,(x_1-x_2)/2}\,e^{-u}\,u^{k-1}\,du\\
&\asymp(x_1-x_2)^{k-1}\,\left(\frac{(x_1-x_2)/2}{1+(\lambda_1-\lambda_2)\,(x_1-x_2)/2}\right)^{k}
\end{align*}
which proves the formula \eqref{eq:I(n)} in the case $n=1$.
\\

Assume that the result holds for the root systems $A_1$, $A_2$, \ldots, $A_{n-1}$.  We will use Proposition \ref{induc} and will proceed by assuming, in turn for each $m<n$, that $\alpha_m=x_m-x_{m+1}$ is the largest root. We will discuss the case $m=n$ at the end. 

We will proceed as follows.  As in the proof of Proposition \ref{truncated}, we will divide the integral in two parts $I_1$ and $I_2$, show that $I_1$ has the desired asymptotics and that $I_2\lesssim I_1$.

Assume now that $\alpha_m=x_m-x_{m+1}$, $1\leq m\leq n-1$, is the largest root. 
Noting that $x_i-y_j\asymp \alpha_m$, $i\leq m$, $m<j\leq n$, $y_i-x_j\asymp \alpha_m$, $i\leq m$, $j\geq m+2$, $y_i-y_j\asymp \alpha_m$, $i\leq m$, $m<j\leq n$, for $y_m\in[M_m,x_m]$,  we have
\begin{align*}
I_1&=\int_{x_{n+1}}^{x_n}\dots\int_{M_m}^{x_m}\dots\int_{x_2}^{x_1}\,\,e^{-\sum_{i=1}^n\,(\lambda_i-\lambda_{n+1})\,(x_i-y_i)}
\\&\qquad
\,\left(\prod_{i\leq j\leq m}\,(x_i-y_j)\,\prod_{i< j\leq m+1}\,(y_i-x_j)\right)^{k-1}
\\&\qquad
\,\left(\prod_{m<i\leq j\leq n}\,(x_i-y_j)\,\prod_{m<i< j\leq n+1}\,(y_i-x_j)\right)^{k-1}
\\&\qquad
\,\left(\prod_{i\leq m<j\leq n}\,(x_i-y_j)\,\prod_{\genfrac{}{}{0pt}{}{i\leq m,}{m+1<j\leq n+1}}\,(y_i-x_j)\right)^{k-1}
\\&\qquad
\,\prod_{i<j\leq m}\,\frac{y_i-y_j}{(1+(\lambda_i-\lambda_j)(y_i-y_j))^{k}} 
\,\prod_{m<i<j\leq n}\,\frac{y_i-y_j}{(1+(\lambda_i-\lambda_j)(y_i-y_j))^{k}} 
\\&\qquad
\,\prod_{i\leq m<j\leq n}\,\frac{y_i-y_j}{(1+(\lambda_i-\lambda_j)(y_i-y_j))^{k}} 
\,dy_1\dots dy_n\\
&\asymp
\alpha_m^{2\,m\,(n-m)\,\,(k-1)}\,\prod_{i\leq m<j\leq n}\,\frac{\alpha_m}{(1+(\lambda_i-\lambda_j)\alpha_m)^{k}} 
\\&\qquad
\,
\int_{x_{n+1}}^{x_n}\dots\int_{M_m}^{x_m}\dots\int_{x_2}^{x_1}
\,e^{-\sum_{i=1}^n\,(\lambda_i-\lambda_{n+1})\,(x_i-y_i)}
\\&\qquad
\,\left(\prod_{i\leq j\leq m}\,(x_i-y_j)\,\prod_{i< j\leq m+1}\,(y_i-x_j)\right)^{k-1}
\\&\qquad   
\,\left(\prod_{m<i\leq j\leq n}\,(x_i-y_j)\,\prod_{m<i< j\leq n+1}\,(y_i-x_j)\right)^{k-1}
\\&\qquad
\,\prod_{i<j\leq m}\,\frac{y_i-y_j}{(1+(\lambda_i-\lambda_j)(y_i-y_j))^{k}} 
\,\prod_{m<i<j\leq n}\,\frac{y_i-y_j}{(1+(\lambda_i-\lambda_j)(y_i-y_j))^{k}} 
\,dy_1\dots dy_n\\
&=\frac{\alpha_m^{2\,m\,(n-m)\,(k-1)+m\,(n-m)}}
{\prod_{i\leq m<j\leq  n}\,(1+(\lambda_i-\lambda_j)\alpha_m)^{k}} \,
\int_{M_m}^{x_m}\dots\int_{x_2}^{x_1}\,e^{-\sum_{i=1}^m\,(\lambda_i-\lambda_{n+1})\,(x_i-y_i)}
\\&\qquad
\,\left(\prod_{i\leq j\leq m}\,(x_i-y_j)\,\prod_{i< j\leq m+1}\,(y_i-x_j)\right)^{k-1}
\\&\qquad
\,\prod_{i<j\leq m}\,\frac{y_i-y_j}{(1+(\lambda_i-\lambda_j)(y_i-y_j))^{k}} 
\,dy_1\dots dy_m
\\&\qquad
\,\int_{x_{n+1}}^{x_n}\dots\int_{x_{m+2}}^{x_{m+1}}
\,e^{-\sum_{i=m+1}^n\,(\lambda_i-\lambda_{n+1})\,(x_i-y_i)}
\\&\qquad
\,\left(\prod_{m<i\leq j\leq n}\,(x_i-y_j)\,\prod_{m<i< j\leq n+1}\,(y_i-x_j)\right)^{k-1}
\\&\qquad
\,\prod_{m<i<j\leq n}\,\frac{y_i-y_j}{(1+(\lambda_i-\lambda_j)(y_i-y_j))^{k}} 
\,dy_{m+1}\dots dy_n\\
&\asymp\frac{\alpha_m^{m\,(n-m)\,
\,(2\,k-1)}}{\prod_{i\leq m<j\leq n}\,(1+(\lambda_i-\lambda_j)\,\alpha_m)^{k}} 
\\&\qquad
\,\frac{\prod_{i<j\leq m}\,(x_i-x_j)^{2\,k-1}}{\prod_{i<j\leq m}\,(1+(\lambda_i-\lambda_j)\,(x_i-x_j))^{k}
\,\prod_{i\leq m}\,(1+(\lambda_i-\lambda_{n+1})\,(x_i-x_{m+1}))^{k}}
\\&\qquad
\,\prod_{m<i<j\leq n+1}\,\frac{(x_i-x_j)^{2\,k-1}}{(1+(\lambda_i-\lambda_j)\,(x_i-x_j))^{k}}
\end{align*}
which has the desired asymptotics (we used Proposition \ref{truncated} and the induction hypothesis on $A_m$ and on $A_{n-m}$).

It remains to show that $I_2=I^{(n)}-I_1\lesssim I_1$.  As in the proof of Proposition \ref{truncated},
it suffices to show that $\tilde{I}_1\gtrsim\tilde{I}_2$ where $\tilde{I}_1$ (respectively $\tilde{I}_2$) is the portion of $I_1$ ($I_2$) integrated with respect to $y_m$.

Now, since $y_m-x_j\asymp \alpha_m$, $m<j\leq n+1$,  and $y_m-y_j\asymp \alpha_m$, $m<j\leq n$,  when $y_m\in[M_m,x_m]$, we have
\begin{align*}
\tilde{I}_1
&\asymp \frac{\alpha_m^{(n+1-m)\,(k-1)+n-m}}{\prod_{m<j\leq n}\,(1+(\lambda_m-\lambda_j)\,\alpha_m)^{k}}
\,\int_{M_m}^{x_m}\,e^{-(\lambda_m-\lambda_{n+1})\,(x_m-y_m)}\,\prod_{i\leq m}\,(x_i-y_m)^{k-1}
\\&\qquad
\,\prod_{i<m}\,\frac{y_i-y_m}{(1+(\lambda_i-\lambda_m)\,(y_i-y_m))^{k}}\,dy_m.
\end{align*}

If $k>1$ then 
\begin{align*}
\tilde{I}_1
&\gtrsim\frac{\alpha_m^{(n+1-m)\,(k-1)+n-m}}{\prod_{m<j\leq n}\,(1+(\lambda_m-\lambda_j)\,\alpha_m)^{k}}
\,\,e^{-(\lambda_m-\lambda_{n+1})\,\alpha_m/2}\,\int_{M_m}^{x_m}\,\prod_{i\leq m}\,(x_m-y_m)^{k-1}
\\&\qquad
\,\prod_{i<m}\,\frac{x_m-y_m}{(1+(\lambda_i-\lambda_m)\,\alpha_m)^{k}}\,dy_m\\
&\gtrsim\frac{\alpha_m^{(n+1-m)\,(k-1)+n-m}}{\prod_{m<j\leq n}\,(1+(\lambda_m-\lambda_j)\,\alpha_m)^{k}}
\,\frac{e^{-(\lambda_m-\lambda_{n+1})\,\alpha_m/2}}{\prod_{i<m}\,(1+(\lambda_i-\lambda_m)\,\alpha_m)^{k}}
\\&\qquad
\,\int_{M_m}^{x_m}\,(x_m-y_m)^{m\,(k-1)+m-1}\,dy_m\\
&=\frac{\alpha_m^{(n+1)\,k-1}}{\prod_{m<j\leq n}\,(1+(\lambda_m-\lambda_j)\,\alpha_m)^{k}}
\,\frac{e^{-(\lambda_m-\lambda_{n+1})\,\alpha_m/2}}{\prod_{i<m}\,(1+(\lambda_i-\lambda_m)\,\alpha_m)^{k}}.
\end{align*}

If $0<k\leq 1$ then
\begin{align*}
\tilde{I}_1
&\gtrsim\frac{\alpha_m^{(n+1-m)\,(k-1)+n-m}}{\prod_{m<j\leq n}\,(1+(\lambda_m-\lambda_j)\,\alpha_m)^{k}}
\,e^{-(\lambda_m-\lambda_{n+1})\,\alpha_m/2}
\,\int_{M_m}^{x_m}\,\prod_{i< m}\,\alpha_m^{k-1}\,(x_m-y_m)^{k-1}
\\&\qquad
\,\prod_{i<m}\,\frac{x_m-y_m}{(1+(\lambda_i-\lambda_m)\,\alpha_m)^{k}}\,dy_m\\
&=\frac{\alpha_m^{(n+1-m)\,(k-1)+n-m+(m-1)\,(k-1)}}{\prod_{m<j\leq n}\,(1+(\lambda_m-\lambda_j)\,\alpha_m)^{k}}
\,\frac{e^{-(\lambda_m-\lambda_{n+1})\,\alpha_m/2}}{\prod_{i<m}\,(1+(\lambda_i-\lambda_m)\,\alpha_m)^{k}}
\\&\qquad
\,\int_{M_m}^{x_m}\,(x_m-y_m)^{m-1+k-1}\,dy_m\\
&\asymp \frac{\alpha_m^{(n+1-m)\,(k-1)+n-m+(m-1)\,(k-1)}}{\prod_{m<j\leq n}\,(1+(\lambda_m-\lambda_j)\,\alpha_m)^{k}}
\,\frac{e^{-(\lambda_m-\lambda_{n+1})\,\alpha_m/2}}{\prod_{i<m}\,(1+(\lambda_i-\lambda_m)\,\alpha_m)^{k}}
\,\alpha_m^{m-1+k}\\
&= \frac{\alpha_m^{(n+1)\,k-1}}{\prod_{m<j\leq n}\,(1+(\lambda_m-\lambda_j)\,\alpha_m)^{k}}
\,\frac{e^{-(\lambda_m-\lambda_{n+1})\,\alpha_m/2}}{\prod_{i<m}\,(1+(\lambda_i-\lambda_m)\,\alpha_m)^{k}}.
\end{align*}

On the other hand, since $x_i-y_m\asymp \alpha_m$, $i\leq m$, and $y_i-y_m\asymp \alpha_m$, $i<m$, when $y_m\in[x_{m+1},M_m]$,
\begin{align*}
\tilde{I}_2
&\asymp \alpha_m^{m\,(k-1)}\,\prod_{i<m}\,\frac{\alpha_m}{(1+(\lambda_i-\lambda_m)\,\alpha_m)^{k}}
\,\int_{x_{m+1}}^{M_m}\,e^{-(\lambda_m-\lambda_{n+1})\,(x_m-y_m)}
\\&\qquad
\,\prod_{m<j\leq n+1}\,(y_m-x_j)^{k-1}
\,\prod_{m<j\leq n}\,\frac{y_m-y_j}{(1+(\lambda_m-\lambda_j)\,(y_m-y_j))^{k}}\,dy_m.
\end{align*}

If $k>1$ then
\begin{align*}
\tilde{I}_2
&\lesssim \alpha_m^{m\,(k-1)}\,\prod_{i<m}\,\frac{\alpha_m}{(1+(\lambda_i-\lambda_m)\,\alpha_m)^{k}}
\,e^{-(\lambda_m-\lambda_{n+1})\,\alpha_m/2}
\,\int_{x_{m+1}}^{M_m}\,\prod_{m<j\leq n}\,(y_m-y_j)^{k-1}
\\&\qquad
\,\prod_{m<j\leq n}\,\frac{y_m-y_j}{(1+(\lambda_m-\lambda_j)\,(y_m-y_j))^{k}}\,(y_m-x_{n+1})^{k-1}\,dy_m\\
&=\alpha_m^{m\,(k-1)}\,\prod_{i<m}\,\frac{\alpha_m}{(1+(\lambda_i-\lambda_m)\,\alpha_m)^{k}}
\,e^{-(\lambda_m-\lambda_{n+1})\,\alpha_m/2}
\\&\qquad
\,\int_{x_{m+1}}^{M_m}
\,\prod_{m<j\leq n}\,\left(\frac{y_m-y_j}{1+(\lambda_m-\lambda_j)\,(y_m-y_j)}\right)^{k}\,(y_m-x_{n+1})^{k-1}\,dy_m\\
&\lesssim \alpha_m^{m\,(k-1)}\,\prod_{i<m}\,\frac{\alpha_m}{(1+(\lambda_i-\lambda_m)\,\alpha_m)^{k}}
\,e^{-(\lambda_m-\lambda_{n+1})\,\alpha_m/2}\,\alpha_m^{k}
\\&\qquad
\,\prod_{m<j\leq n}\,\left(\frac{\alpha_m}{1+(\lambda_m-\lambda_j)\,\alpha_m}\right)^{k}
\lesssim \tilde{I}_1.
\end{align*}

If $0<k\leq 1$ then
\begin{align*}
\tilde{I}_2
&\lesssim\alpha_m^{m\,(k-1)}\,\prod_{i<m}\,\frac{\alpha_m}{(1+(\lambda_i-\lambda_m)\,\alpha_m)^{k}}
\,e^{-(\lambda_m-\lambda_{n+1})\,\alpha_m/2}
\,\int_{x_{m+1}}^{M_m}\,\prod_{m+1<j\leq n+1}\,(y_m-x_j)^{k-1}
\\&\qquad
\,\prod_{m<j\leq n}\,\frac{y_m-x_{j+1}}{(1+(\lambda_m-\lambda_j)\,(y_m-x_{j+1}))^{k}}
\,(y_m-x_{m+1})^{k-1}\,dy_m\\
&=\alpha_m^{m\,(k-1)}\,\prod_{i<m}\,\frac{\alpha_m}{(1+(\lambda_i-\lambda_m)\,\alpha_m)^{k}}
\\&\qquad
\,e^{-(\lambda_m-\lambda_{n+1})\,\alpha_m/2}\,\int_{x_{m+1}}^{M_m}
\,\left(\prod_{m<j\leq n}\,\frac{y_m-x_{j+1}}{1+(\lambda_m-\lambda_j)\,(y_m-x_{j+1})}\right)^{k}
\\&\qquad
\,(y_m-x_{m+1})^{k-1}\,dy_m\\
&\lesssim \alpha_m^{m\,(k-1)}\,\prod_{i<m}\,\frac{\alpha_m}{(1+(\lambda_i-\lambda_m)\,\alpha_m)^{k}}
\,e^{-(\lambda_m-\lambda_{n+1})\,\alpha_m/2}\,\alpha_m^{k}
\\&\qquad
\,\left(\prod_{m<j\leq n}\,\frac{\alpha_m}{1+(\lambda_m-\lambda_j)\,\alpha_m}\right)^{k}
\lesssim \tilde{I}_1.
\end{align*}

By the structure of the root system $A_n$, the case $\alpha_n$ maximal is equivalent to the case $\alpha_1$ maximal.  Indeed, in formula \eqref{iter}, one does not assume that $\lambda\in\overline{\a^+}$.  We also know that $\psi_\lambda(e^X)$ is invariant under permutation of its $\lambda$ argument.  Hence one can re-write \eqref{iter}
by exchanging $\lambda_1$ and $\lambda_{n+1}$,
\begin{align*}
\psi_\lambda(e^X)&=e^{\lambda(X)}\ \hbox{if $n=1$ and}\nonumber\\
\psi_\lambda(e^X)&=\frac{\Gamma(k\,(n+1))}{(\Gamma(k))^{n+1}}\,e^{\lambda_1\,\sum_{r=1}^{n+1}\,x_r}\,(\prod_{i<j\leq n+1}\,(x_i-x_j))^{1-2\,k}\,\int_{x_{n+1}}^{x_n}\,\cdots\,\int_{x_2}^{x_1}
\,\psi_{\widetilde{\lambda_0}}(e^Y)
\\&\left[\prod_{i=1}^n\,\left(\prod_{j=1}^i\,(x_j-y_i)\,\prod_{j=i+1}^{n+1}\,(y_i-x_j)\right)\right]^{k-1}
\,\prod_{i<j\leq n}\,(y_i-y_j)\,dy_1\cdots dy_n
\end{align*}
where $\widetilde{\lambda_0}(U)=\sum_{r=2}^{n+1}\,(\lambda_r-\lambda_1)\,u_r$. We used the fact that
\begin{align*}
\psi_{[\lambda_{n+1}-\lambda_1,\lambda_2-\lambda_1,\dots,\lambda_n-\lambda_1]}(e^Y)
=\psi_{[\lambda_2-\lambda_1,\dots,\lambda_n-\lambda_1,\lambda_{n+1}-\lambda_1]}(e^Y).
\end{align*}

Conjecture \ref{C} is equivalent to
\begin{align*}
J^{(n)}\asymp
\frac{\pi(X)^{2\,k-1}}{\prod_{i<j\leq n+1}\,((1+(\lambda_i-\lambda_j)(x_i-x_j))^{k}}
\end{align*}
where
\begin{align*}
J^{(n)}&=
\int_{x_{n+1}}^{x_n}\,\dots\int_{x_2}^{x_1}
\,e^{-\sum_{i=1}^{n}\,(
\lambda_1-\lambda_{i+1})\,(y_i-x_{i+1})}
\,\left(\prod_{i\leq j\leq n}\,(x_i-y_j)\,\prod_{i< j\leq n+1}\,(y_i-x_j)\right)^{k-1}
\\&\qquad
\,\prod_{i<j\leq n}\,\frac{y_i-y_j}{(1+(\lambda_{i+1}-\lambda_{j+1})(y_i-y_j))^{k}} 
\,dy_1\dots dy_n.
\end{align*}
The term $J^{(n)}$ corresponds to a constant multiple of $e^{-\lambda(X)}\,\pi(X)^{2\,k-1}\,\psi_\lambda(e^X)$ in which we have replaced $\psi_{\lambda_0}(e^Y)$ in \eqref{iter} by its asymptotic expression conjectured  in \eqref{CC}.  One then proves the case $\alpha_n$ maximal as one proves the case $\alpha_1$ maximal.

This concludes the proof of the estimate \eqref{CC} for $X\in\a^+$ (recall that the formula \eqref{iter} holds for $X\in\a^+$). The estimates that we find for $\psi_\lambda(e^X)$ extend to $X\in\overline{\a^+}$ by continuity.
\end{proof}

\section{Applications}\label{app}
\subsection{Estimates of the $W$-invariant Dunkl Heat Kernel}\label{appHeat}

The following theorem  establishes,  for root systems $A_n$  and  for any multiplicity $k>0$,  the estimates of the $W$-invariant Dunkl Heat Kernel conjectured in the
Conjecture 18 of \cite{PGPS1}.

\begin{theorem}\label{heat}
For the root systems of type $A$, we have for $X$, $Y\in \overline{\a^+}$
\begin{align*}
p^W_t(X,Y)\asymp  \frac{t^{-d/2}\,e^{-|X-Y|^2/(4\,t)}}{\prod_{\alpha>0}\,(t+\alpha(X)\,\alpha(Y))^{k}}.
\end{align*}
\end{theorem}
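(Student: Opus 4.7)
The plan is to deduce the heat kernel estimate directly from the identity \eqref{heatSpher1} together with the sharp spherical-function estimate of Theorem \ref{main}, followed by an elementary algebraic rearrangement. No new analytic input beyond Section \ref{comp} should be needed.

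First, I would fix $X,Y\in\overline{\a^+}$ and $t>0$ and start from \eqref{heatSpher1},
\begin{align*}
p_t^W(X,Y)=\frac{1}{2^{\gamma+d/2}\,c_k}\,t^{-d/2-\gamma}\,e^{-(|X|^2+|Y|^2)/(4t)}\,\psi_X\!\bigl(Y/(2t)\bigr).
\end{align*}
Because $Y/(2t)\in\overline{\a^+}$ and the spherical function is symmetric in its two arguments (inherited from $E_k(U,V)=E_k(V,U)$), Theorem \ref{main} applied with parameter $X$ and evaluation point $Y/(2t)$ yields
\begin{align*}
\psi_X\!\bigl(Y/(2t)\bigr)\asymp\frac{\exp\!\bigl(\langle X,Y\rangle/(2t)\bigr)}{\prod_{\alpha>0}\bigl(1+\alpha(X)\alpha(Y)/(2t)\bigr)^{k}}.
\end{align*}

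Next, I would combine the exponential factors via $|X|^2+|Y|^2-2\langle X,Y\rangle=|X-Y|^2$, which turns the Gaussian prefactor into $e^{-|X-Y|^2/(4t)}$. For the polynomial denominator, I would apply the identity
\begin{align*}
\bigl(1+\alpha(X)\alpha(Y)/(2t)\bigr)^{k}=(2t)^{-k}\bigl(2t+\alpha(X)\alpha(Y)\bigr)^{k},
\end{align*}
take the product over $\alpha>0$, and absorb the resulting factor $(2t)^{-\gamma}$ into $t^{-d/2-\gamma}$ to leave $t^{-d/2}$ up to a multiplicative constant. The trivial comparison $2t+\alpha(X)\alpha(Y)\asymp t+\alpha(X)\alpha(Y)$, uniform in $(t,X,Y)$, then delivers the claimed estimate.

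There is no substantial obstacle at this stage: once Theorem \ref{main} is in hand, the proof is a substitution followed by bookkeeping of the $t$-powers and the constants $2$ and $c_k$. The only point one should state explicitly is that the estimate \eqref{CC}, written in Conjecture \ref{C} with $\lambda,X\in\overline{\a^+}$, may be used with the roles of the two arguments swapped; this is immediate from the symmetry $\psi_\lambda(V)=\psi_V(\lambda)$ of the $W$-invariant Dunkl kernel.
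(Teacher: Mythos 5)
Your proposal is correct and follows essentially the same route as the paper's own proof: substitute the estimate of Theorem \ref{main} into \eqref{heatSpher1}, recombine the exponentials via $|X|^2+|Y|^2-2\langle X,Y\rangle=|X-Y|^2$, and cancel the factor $t^{-\gamma}$ against the $(2t)^{k}$ pulled out of each factor of the denominator. The remark about using \eqref{CC} with the two arguments swapped is a harmless clarification (the right-hand side of \eqref{CC} is itself symmetric in $X$ and $\lambda$), and the paper performs the identical substitution without comment.
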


\begin{proof}
Consider the relation \eqref{heatSpher1}
\begin{align*}
p_t^W(X,Y)=t^{-d/2-\gamma}\,e^{-|X-Y|^2/(4\,t)}\,\psi_X(Y/(2\,t))
\end{align*}
with $\gamma=\,\sum_{\alpha>0}\,k(\alpha)= k|\Sigma^+|$.  From Theorem \ref{main}, we have
\begin{align*}
p_t^W(X,Y)&\asymp t^{-d/2-\gamma}\,e^{-(|X|^2+|Y|^2)/(4\,t)}\,\frac{e^{\langle X,Y/(2\,t)\rangle}}{\prod_{\alpha>0}\,(1+\alpha(X)\,\alpha(Y/(2\,t)))^{k}}\\
&=t^{-d/2-\gamma+k\,|\Sigma^+|}\,e^{-(|X|^2+|Y|^2)/(4\,t)}\,\frac{e^{\langle X,Y/(2\,t)\rangle}}{\prod_{\alpha>0}\,(2\,t+\alpha(X)\,\alpha(Y))^{k}}\\
&\asymp \frac{t^{-d/2}\,e^{-|X-Y|^2/(4\,t)}}{\prod_{\alpha>0}\,(t+\alpha(X)\,\alpha(Y))^{k}}.
\end{align*}
\end{proof}

\subsection{Estimates of the $W$-invariant Dunkl Newton Kernel}\label{appNewton}

The $W$-invariant Dunkl Newton kernel $N^W(X,Y)$ is the kernel of the inverse operator of the Dunkl Laplacian $\Delta^W$. It 
is the fundamental solution of the problem $\Delta^W\,u=f$ where $f$ is given and $|u(x)|\to0$ as $x\to\infty$. It is defined by 
\begin{align*}
N^W(X,Y)=\int_0^\infty\,p^W_t(X,Y)\,dt,
\end{align*}
where $p^W_t(X,Y)$ is the heat kernel of $\Delta^W$. 

In \cite{PGTLPS}, we stated the following conjecture for the Weyl invariant Newton kernel for $d\geq 3$ and proved it for complex root systems.
\begin{conjecture}\label{TLN} 
For $X$, $Y\in \overline{\a^+}$ and $d\geq 3$, we have
\begin{align*}
N^W(X,Y)\asymp \frac{1}{|X-Y|^{d-2}\,\prod_{\alpha\in \Sigma^+}|X-\sigma_\alpha Y|^{2\,k(\alpha)}}.
\end{align*}
\end{conjecture}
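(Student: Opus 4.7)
The plan is to substitute the sharp heat kernel estimate of Theorem \ref{heat} directly into the defining integral $N^W(X,Y)=\int_0^\infty p_t^W(X,Y)\,dt$ and reduce to a one-dimensional computation of Gamma type. Writing $r=|X-Y|^2/4$ and $a_\alpha=\alpha(X)\alpha(Y)$, both nonnegative since $X,Y\in\overline{\a^+}$, Theorem \ref{heat} gives
\begin{align*}
N^W(X,Y)\asymp \int_0^\infty \frac{t^{-d/2}\,e^{-r/t}}{\prod_{\alpha>0}(t+a_\alpha)^{k(\alpha)}}\,dt.
\end{align*}
The substitution $u=1/t$ followed by $v=ru$ converts this into
\begin{align*}
N^W(X,Y)\asymp r^{-(d-2)/2-\gamma}\,J(c),\qquad J(c):=\int_0^\infty \frac{v^{d/2+\gamma-2}\,e^{-v}}{\prod_{\alpha>0}(1+c_\alpha v)^{k(\alpha)}}\,dv,
\end{align*}
with $c_\alpha=a_\alpha/r$ and $\gamma=\sum_{\alpha>0}k(\alpha)$. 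The core of the argument is the estimate
\begin{align*}
J(c)\asymp \prod_{\alpha>0}(1+c_\alpha)^{-k(\alpha)}.
\end{align*}

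For the lower bound on $J(c)$, I would restrict to $v\in[1,2]$, where $v\asymp 1$, $e^{-v}\asymp 1$, and each factor $1+c_\alpha v\asymp 1+c_\alpha$, so the restricted integral already has the desired order. For the upper bound, I split at $v=1$. On $[1,\infty)$, the monotonicity $1+c_\alpha v\geq 1+c_\alpha$ allows pulling the product outside, leaving the convergent tail $\int_1^\infty v^{d/2+\gamma-2}e^{-v}\,dv$. On $(0,1]$, the key ingredient is the elementary inequality
\begin{align*}
1+c_\alpha v\geq v\,(1+c_\alpha),\qquad v\in(0,1],\ c_\alpha\geq 0
\end{align*}
(which is just $v\leq 1$), giving $(1+c_\alpha v)^{-k(\alpha)}\leq v^{-k(\alpha)}(1+c_\alpha)^{-k(\alpha)}$. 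Multiplying across positive roots produces a factor $v^{-\gamma}$ that cancels in the integrand, leaving $\int_0^1 v^{d/2-2}e^{-v}\,dv$, which is finite precisely because $d\geq 3$.

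Combining these bounds and rewriting $\prod(1+c_\alpha)^{-k(\alpha)}=r^{\gamma}\prod(r+a_\alpha)^{-k(\alpha)}$ yields $N^W(X,Y)\asymp r^{-(d-2)/2}\prod_{\alpha>0}(r+a_\alpha)^{-k(\alpha)}$. To match the form in Conjecture \ref{TLN}, I would use the direct computation
\begin{align*}
|X-\sigma_\alpha Y|^2=|X-Y|^2+4\,\frac{\alpha(X)\alpha(Y)}{\langle \alpha,\alpha\rangle},
\end{align*}
which, since all roots in the $A_n$ system share a common length, gives $|X-\sigma_\alpha Y|^2\asymp r+a_\alpha$, and similarly $r^{-(d-2)/2}\asymp |X-Y|^{-(d-2)}$. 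These substitutions deliver the right-hand side of Conjecture \ref{TLN}.

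The main technical obstacle is the upper bound on $J(c)$ near $v=0$: a careless estimate bounding $(1+c_\alpha v)^{-k(\alpha)}\leq 1$ would discard the entire factor $\prod(1+c_\alpha)^{-k(\alpha)}$, which is exactly the nontrivial content of the sharp estimate. The inequality $1+c_\alpha v\geq v(1+c_\alpha)$ is what converts the large-$c_\alpha$ decay into integrable powers of $v$, and the standing hypothesis $d\geq 3$ is precisely what keeps the resulting $v^{d/2-2}$ integrable near zero.
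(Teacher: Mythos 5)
Your proposal is correct and follows essentially the same route as the paper's proof of Theorem \ref{Newton}: substitute the sharp heat kernel estimate of Theorem \ref{heat} into $N^W(X,Y)=\int_0^\infty p_t^W(X,Y)\,dt$, change variables $v=|X-Y|^2/(4t)$, and reduce everything to the sharp evaluation of a Gamma-type integral, which in the paper is Lemma \ref{ai}. The only divergence is in how that integral estimate is established: you split the integration domain at $v=1$ and use the pointwise inequality $1+c_\alpha v\ge v\,(1+c_\alpha)$ on $(0,1]$ (together with $1+c_\alpha v\ge 1+c_\alpha$ on $[1,\infty)$), whereas the paper's Lemma \ref{ai} splits the set of indices according to whether $a\le b_i$; both arguments are valid, rely on exactly the same convergence condition ($d\ge 3$, equivalently $N>km-1$ in the lemma), and yield the same conclusion.
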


In this section, we prove the conjecture in the case of root systems of type $A$ and prove a similar result in the case $d=2$.\\

The next three lemmas will be useful to derive sharp estimates for the Newton kernel.

\begin{lemma}\label{ai}
Suppose $k>0$, $a\geq 0$, $b_i\geq 0$,
$a+b_i>0$, $i=1$, \dots, $m$ and $N>k\,m-1$.  Then
\begin{align*}
J:=\int_0^\infty\,\frac{u^N\,e^{-u}\,du}{\prod_{i=1}^m\,(a+b_i\,u)^k}
\asymp \frac{1}{\prod_{i=1}^m\,(a+b_i)^k}.
\end{align*}
\end{lemma}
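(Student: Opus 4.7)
\textbf{Proof plan for Lemma \ref{ai}.} The statement is a two-sided comparison, so I would establish the upper bound $J \lesssim \prod_i (a+b_i)^{-k}$ and the lower bound $J \gtrsim \prod_i (a+b_i)^{-k}$ separately. The key observation is that each factor $a + b_i u$ is comparable to $a + b_i$ on the interval $u \in [1,2]$ (giving the lower bound cheaply), while on $[0,1]$ one can still control it from below by $u(a+b_i)$.

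\textbf{Upper bound.} Split $J = \int_0^1 + \int_1^\infty$. For $u \geq 1$, since $b_i u \geq b_i$ and $a \geq 0$, we have $a + b_i u \geq a + b_i$, so
\begin{align*}
\int_1^\infty \frac{u^N e^{-u}\,du}{\prod_i(a+b_iu)^k} \leq \frac{1}{\prod_i(a+b_i)^k}\int_1^\infty u^N e^{-u}\,du \lesssim \frac{1}{\prod_i(a+b_i)^k}.
\end{align*}
For $u \in (0,1]$, since $a \geq ua$, we get $a + b_i u \geq u(a + b_i)$, hence
\begin{align*}
\int_0^1 \frac{u^N e^{-u}\,du}{\prod_i(a+b_iu)^k} \leq \frac{1}{\prod_i(a+b_i)^k}\int_0^1 u^{N - km} e^{-u}\,du.
\end{align*}
The remaining integral is finite precisely because $N - km > -1$, which is the hypothesis.

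\textbf{Lower bound.} Restrict to the interval $u \in [1,2]$. There $a + b_i u \leq a + 2 b_i \leq 2(a + b_i)$, so $\prod_i (a + b_i u)^k \leq 2^{km}\prod_i(a+b_i)^k$, and
\begin{align*}
J \geq \int_1^2 \frac{u^N e^{-u}\,du}{\prod_i(a+b_iu)^k} \geq \frac{1}{2^{km}\prod_i(a+b_i)^k}\int_1^2 u^N e^{-u}\,du \gtrsim \frac{1}{\prod_i(a+b_i)^k}.
\end{align*}

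\textbf{Main difficulty.} There isn't a substantial obstacle here: the lemma is a soft integrability estimate, and the only place the hypotheses genuinely enter is the bound $a + b_i u \geq u(a+b_i)$ on $(0,1]$, which converts the possible singularity at $0$ into an integrable power $u^{N-km}$ using exactly the assumption $N > km - 1$. The role of the factor $e^{-u}$ is just to keep the integral finite at infinity; any integrable decay at $\infty$ would do. The constants in $\asymp$ depend only on $k$, $m$, and $N$.
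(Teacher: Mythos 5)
Your proof is correct. The overall strategy --- separate elementary upper and lower bounds via pointwise comparison of $a+b_i\,u$ with $a+b_i$ --- is the same as the paper's, but the decompositions differ. For the upper bound the paper does not split the domain of integration: it splits the \emph{index set} into $\Lambda=\{i:\ a\le b_i\}$ and its complement, using $a+b_i\,u\ge b_i\,u$ for $i\in\Lambda$ and $a+b_i\,u\ge a$ otherwise, so that the hypothesis $N>km-1$ enters through the convergence of $\int_0^\infty u^{N-|\Lambda|\,k}\,e^{-u}\,du$. Your split of the domain at $u=1$, with the single inequality $a+b_i\,u\ge u\,(a+b_i)$ on $(0,1]$, is a uniform alternative that avoids the case distinction on the indices; the hypothesis enters in exactly the same role, to make $u^{N-km}$ integrable at $0$. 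For the lower bound the paper integrates over all of $[1,\infty)$ using $a+b_i\,u\le(a+b_i)\,u$ for $u\ge1$, whereas you restrict to the compact window $[1,2]$ where $a+b_i\,u\le 2\,(a+b_i)$; both give the bound immediately. The two arguments are equally complete, and in both the implicit constants depend only on $k$, $m$ and $N$.
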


\begin{proof}
Note that $a+b_i\,u\le (a+b_i)u $ whenever $u\geq 1$. 
Therefore, we have
\begin{align*}
J&\geq \int_1^\infty\,\frac{u^N\,e^{-u}\,du}{\prod_{i=1}^m\,(a+b_i\,u)^k}
\geq \frac{1}{\prod_{i=1}^m\,(a+b_i)^k}\,\int_1^\infty\,u^{N-m\,k}\,e^{-u}\,du.
\end{align*}
Let $\Lambda$ be the (possibly empty) set of indices where $a\leq b_i$.
\begin{align*}
J&\leq \frac{1}{\left(\prod_{i\not\in \Lambda}\,a\right)^k}\,\int_0^\infty\,\frac{u^N\,e^{-u}\,du}{\prod_{i\in\Lambda}\,(b_i\,u)^k}
=\frac{1}{\left(\prod_{i\not\in\Lambda}\,a\right)^k\,\left(\prod_{i\in\Lambda}\,b_i\right)^k}\,\int_0^\infty\,u^{N-|\Lambda|\,k}\,e^{-u}\,du\\
&\lesssim\frac{1}{\prod_{i=1}^k\,(a+b_i)^k}\,\max_{0\leq m_0\leq m}\,\int_0^\infty\,u^{N-m_0\,k}\,e^{-u}\,du
\end{align*}
(we understand an empty product to be equal to 1).
\end{proof}

\begin{lemma}\label{a1}
Suppose $k>0$, $a>0$ and $b\geq 0$. Then
\begin{align*}
J:=\int_0^\infty\,\frac{u^{k-1}\,e^{-u}\,du}{(a+b\,u)^k}\asymp \frac{\ln (2+b/a)}{(a+b)^k}.
\end{align*}
\end{lemma}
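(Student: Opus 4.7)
The plan is to rescale to reduce the lemma to a one-parameter estimate, then analyze the resulting integral by splitting the range of integration according to where $1+r\,u$ is comparable to $1$ versus where it is comparable to $r\,u$.

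First I would factor $a^k$ out of the denominator by setting $r=b/a$, writing
\[
J=a^{-k}\,I(r),\qquad I(r):=\int_0^\infty\,\frac{u^{k-1}\,e^{-u}}{(1+r\,u)^{k}}\,du,
\]
and noting that $(a+b)^k=a^k\,(1+r)^k$. The claim thus reduces to the uniform estimate $I(r)\asymp \ln(2+r)/(1+r)^k$ for $r\geq 0$.

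For $r\leq 1$ both $\ln(2+r)$ and $(1+r)^k$ are bounded above and below by absolute constants, so it suffices to show $I(r)\asymp 1$. The upper bound is immediate from $(1+r\,u)^{-k}\leq 1$ and $\int_0^\infty u^{k-1}e^{-u}\,du=\Gamma(k)$; a matching lower bound follows by restricting the integration to $u\in[0,1]$, where $(1+r\,u)^{-k}\gtrsim 1$ and $e^{-u}\gtrsim 1$.

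The main case is $r>1$, where the logarithm enters. I would split $[0,\infty)=[0,1/r]\cup[1/r,1]\cup[1,\infty)$ and estimate each piece. On $[0,1/r]$, $1+r\,u\asymp 1$ and $e^{-u}\asymp 1$, so the integrand is $\asymp u^{k-1}$ and the contribution is $\asymp r^{-k}$. On $[1,\infty)$, $1+r\,u\asymp r\,u$, so the integrand is $\asymp r^{-k}u^{-1}e^{-u}$, contributing $\asymp r^{-k}$. The middle piece $[1/r,1]$ is where the logarithm is generated: here $1+r\,u\asymp r\,u$ and $e^{-u}\asymp 1$, so the integrand reduces to $\asymp r^{-k}u^{-1}$, whose integral over $[1/r,1]$ is exactly $r^{-k}\ln r\asymp r^{-k}\ln(2+r)$. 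Summing the three contributions yields $I(r)\asymp r^{-k}\ln(2+r)$, which equals $\ln(2+r)/(1+r)^k$ up to constants since $1+r\asymp r$ when $r>1$.

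The only (modest) obstacle is identifying the source of the logarithmic factor: it comes exclusively from the middle range $[1/r,1]$, where the integrand decays like $u^{-1}$, and not from the neighborhood of the origin (where $u^{k-1}$ is integrable) or from the tail (where the exponential decay kills the integrand). Everything else is elementary bookkeeping.
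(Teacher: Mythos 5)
Your argument is correct and essentially identical to the paper's: after your cosmetic normalization $r=b/a$, the three-way split at $1/r$ and $1$ and the estimates on each piece are exactly the paper's decomposition of $[0,\infty)$ at $a/b$ and $1$, with the logarithm arising from the middle interval in both cases. One tiny caveat: the middle piece alone equals $r^{-k}\ln r$, which is \emph{not} $\asymp r^{-k}\ln(2+r)$ uniformly as $r\downarrow 1$ (the left side vanishes); the equivalence only holds once you add the two pieces of size $r^{-k}$, which your final summation does, so the conclusion stands.
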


\begin{proof}
If $0\leq b\leq a$ then 
\begin{align*}
\int_0^\infty\,\frac{u^{k-1}\,e^{-u}\,du}{(a+a\,u)^k}\leq J\leq
\int_0^\infty\,\frac{u^{k-1}\,e^{-u}\,du}{a^k}
\end{align*}
and the result holds.  We now assume $a\leq b$.  We then have
\begin{align*}
J&\asymp\int_0^{a/b}\,\frac{u^{k-1}}{a^k}\,du+\int_{a/b}^1\,\frac{u^{k-1}\,du}{(b\,u)^k}+\int_1^\infty\,\frac{u^{k-1}\,e^{-u}\,du}{(b\,u)^k}\\
&\asymp\frac{1}{b^k}+ \frac{1}{b^k}\,\int_{a/b}^1\,u^{-1}\,du+  \frac{1}{b^k}\,\int_1^\infty\,u^{-1}\,e^{-u}\,du\\
&\asymp \frac{1}{b^k}+ \frac{1}{b^k}\,\ln (b/a)+ \frac{1}{b^k}\asymp \frac{\ln (2+b/a)}{b^k}
\end{align*}
which concludes the proof.
\end{proof}

\begin{lemma}\label{a2}
Suppose $k>0$, $a\geq 0$ and $0\leq b_1\leq b_2\leq b_3$ then
\begin{align*}
J:=\int_0^\infty\,\frac{u^{3\,k-1}\,e^{-u}\,du}{(a+b_1\,u)^k\,(a+b_2\,u)^k\,(a+b_3\,u)^k}
\asymp \frac{\ln(2+b_1/a)}{(a+b_1)^k\,(a+b_2)^k\,(a+b_3)^k}.
\end{align*}
\end{lemma}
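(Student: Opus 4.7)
The plan is to use Lemmas \ref{a1} and \ref{ai} as the two building blocks, with a dichotomy based on the relative size of $a$ and $b_1$. I may assume $a > 0$: when $a = 0$ and $b_1 > 0$ both sides are infinite, and $a = b_1 = 0$ is excluded since the right-hand side denominator would vanish.

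In the regular regime $a \geq b_1$, one has $\ln(2+b_1/a) \asymp 1$ and $a+b_1 \asymp a$, so the target reduces to $J \asymp 1/[a\,(a+b_2)(a+b_3)]^k$. For the upper bound I use $(a+b_1 u)^{-k} \leq a^{-k}$ to peel off $1/a^k$ and then apply Lemma \ref{ai} to the remaining two-factor integrand; the exponent condition $N = 3k-1 > mk-1 = 2k-1$ is satisfied. For the lower bound I restrict to $u \geq 1$, on which $a+b_i u \leq (a+b_i)\,u$, so that the integrand is $\gtrsim u^{-1} e^{-u}/\prod_i (a+b_i)^k$ and integration over $(1,\infty)$ yields the desired order.

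In the singular regime $a < b_1$, one has $a+b_i \asymp b_i$, so the target becomes $\ln(b_1/a)/(b_1 b_2 b_3)^k$. The key observation is that
\begin{align*}
\frac{u^2}{(a+b_2 u)(a+b_3 u)} = \frac{1}{(a/u + b_2)(a/u + b_3)}
\end{align*}
is monotone increasing in $u$ with supremum $1/(b_2 b_3)$, which lets me pull $1/(b_2 b_3)^k$ out as a constant and reduce the problem to the single-factor integrand $u^{k-1} e^{-u}/(a+b_1 u)^k$ handled by Lemma \ref{a1}. For the matching lower bound I restrict to $u \in (a/b_1, 1)$, on which $a+b_i u \leq 2\,b_i u$ for each $i$, extracting the $\ln(b_1/a)$ from $\int_{a/b_1}^1 u^{-1}\,du$. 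The regular-regime lower bound remains valid here and, combined with the one just produced, gives $J \gtrsim (1+\ln(b_1/a))/(b_1 b_2 b_3)^k \asymp \ln(2+b_1/a)/(b_1 b_2 b_3)^k$ uniformly in the singular regime.

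The main obstacle I expect is spotting the monotonicity reduction used in the singular-regime upper bound: absorbing two of the three denominator factors into the constant $1/(b_2 b_3)^k$ via the supremum of $u^2/[(a+b_2 u)(a+b_3 u)]$ is precisely what permits Lemma \ref{a1} to produce the logarithm. Without that reduction one would be forced into a region-by-region analysis of the full three-factor integrand, similar in spirit to the case split performed in Proposition \ref{truncated}.
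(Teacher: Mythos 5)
Your proof is correct, and its overall architecture matches the paper's: the same dichotomy $a\geq b_1$ versus $a<b_1$, the same use of Lemma \ref{ai} after peeling off $a^{-k}$ in the regular regime, and the same mechanism for producing the logarithm (integrating $u^{-1}$ over $[a/b_1,\,\cdot\,]$ where all three factors behave like $b_iu$). The one place you genuinely diverge is the singular-regime upper bound. The paper attacks the three-factor integrand directly, splitting $[0,\infty)$ into $[0,a/b_1]$, $[a/b_1,2]$, $[2,\infty)$ and bounding each piece; you instead observe that $u^2/[(a+b_2u)(a+b_3u)]=1/[(a/u+b_2)(a/u+b_3)]$ increases to $1/(b_2b_3)$, factor $(b_2b_3)^{-k}$ out of the integral, and hand the remaining single-factor integral $\int_0^\infty u^{k-1}e^{-u}(a+b_1u)^{-k}\,du$ to Lemma \ref{a1}. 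Your reduction is shorter and makes the logical dependence on Lemma \ref{a1} explicit (the region-by-region work is done once, inside that lemma, rather than repeated); the paper's direct split is self-contained and symmetric with its treatment of the lower bound. Your lower bound ($\int_{a/b_1}^1$ plus the $\int_1^\infty$ tail, then $1+\ln(b_1/a)\asymp\ln(2+b_1/a)$) is an equivalent repackaging of the paper's single integral over $[a/b_1,2]$. Your preliminary remarks disposing of $a=0$ are a sensible addition that the paper leaves implicit.
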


\begin{proof}
If $b_1\leq a$ then
\begin{align*}
\int_1^\infty\,\frac{u^{3\,k-1}\,e^{-u}\,du}{(a+a\,u)^k\,((a+b_2)\,u)^k\,((a+b_3)\,u)^k}
\leq J
\leq \int_0^\infty\,\frac{u^{3\,k-1}\,e^{-u}\,du}{a^k\,(a+b_2\,u)^k\,(a+b_3\,u)^k}
\end{align*}
and the result follows in this case using Lemma \ref{ai} for the upper bound.

If $a\leq b_1\leq b_2\leq b_3$ then
\begin{align*}
J\gtrsim \int_{a/b_1}^2\,\frac{u^{3\,k-1}\,du}{(b_1\,u)^k\,(b_2\,u)^k\,(b_3\,u)^k}
\asymp \frac{\ln (2\,b_1/a)}{(b_1\,b_2\,b_3)^k}
\end{align*}
while
\begin{align*}
J&\lesssim \int_0^{a/b_1}\,\frac{u^{3\,k-1}\,du}{a^k\,(b_2\,u)^k\,(b_3\,u)^k}
+\int_{a/b_1}^2\,\frac{u^{3\,k-1}\,du}{(b_1\,u)^k\,(b_2\,u)^k\,(b_3\,u)^k}
+\int_2^\infty\,\frac{u^{3\,k-1}\,e^{-u}\,du}{(b_1\,u)^k\,(b_2\,u)^k\,(b_3\,u)^k}\\
&\asymp \frac{1}{b_1^k\,b_2^k\,b_3^k}+\frac{\ln(2\,b_1/a)}{b_1^k\,b_2^k\,b_3^k}+\frac{1}{b_1^k\,b_2^k\,b_3^k}
\end{align*}
and the result follows in this case.
\end{proof}

\begin{theorem}\label{Newton}
For the root system $A_n$ and $d\geq 3$, we have  for $X$, $Y\in \overline{\a^+}$
\begin{align*}
N^W(X,Y)\asymp \frac{|X-Y|^{2-d}}
{\prod_{\alpha>0}\,|X-\sigma_\alpha\,Y|^{2\,k}}.
\end{align*}
\end{theorem}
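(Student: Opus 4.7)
The plan is to feed the sharp heat kernel estimate of Theorem \ref{heat} into the time integral $N^W(X,Y) = \int_0^\infty p^W_t(X,Y)\,dt$, normalize the Gaussian factor by a change of variables, and then close the computation with Lemma \ref{ai}.

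By Theorem \ref{heat},
\begin{align*}
N^W(X,Y) \asymp \int_0^\infty \frac{t^{-d/2}\,e^{-|X-Y|^2/(4t)}}{\prod_{\alpha>0}\bigl(t + \alpha(X)\,\alpha(Y)\bigr)^k}\,dt.
\end{align*}
The substitution $u = |X-Y|^2/(4t)$ turns the exponential into a universal $e^{-u}$ and converts each factor $t+\alpha(X)\alpha(Y)$ into $(|X-Y|^2 + 4u\,\alpha(X)\alpha(Y))/(4u)$. Collecting the $u$-powers and extracting $|X-Y|^{2-d}$ gives, up to a positive constant depending only on $d$ and $k$,
\begin{align*}
N^W(X,Y) \asymp |X-Y|^{2-d} \int_0^\infty \frac{u^{N}\,e^{-u}}{\prod_{\alpha>0}\bigl(|X-Y|^2 + 4\,\alpha(X)\,\alpha(Y)\,u\bigr)^k}\,du,
\end{align*}
with $N = d/2 + k|\Sigma^+| - 2$.

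This integral is exactly of the shape treated by Lemma \ref{ai}, with $a = |X-Y|^2$, $b_\alpha = 4\,\alpha(X)\,\alpha(Y)$ and $m = |\Sigma^+|$. The hypothesis $N > km - 1$ of that lemma reduces to $d > 2$, which is precisely the assumption of the theorem, and one has $a + b_\alpha > 0$ for every $\alpha$ whenever $X \ne Y$ (the diagonal being the well-known singularity of the Newton kernel). Applying Lemma \ref{ai} yields
\begin{align*}
N^W(X,Y) \asymp \frac{|X-Y|^{2-d}}{\prod_{\alpha>0}\bigl(|X-Y|^2 + 4\,\alpha(X)\,\alpha(Y)\bigr)^k}.
\end{align*}

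To match the right-hand side of the theorem, I invoke the elementary reflection identity
\begin{align*}
|X - \sigma_\alpha Y|^2 = |X-Y|^2 + \tfrac{4}{|\alpha|^2}\,\alpha(X)\,\alpha(Y),
\end{align*}
which follows by expanding $\sigma_\alpha Y = Y - (2\langle\alpha,Y\rangle/\langle\alpha,\alpha\rangle)\alpha$. Since all roots of $A_n$ have the same length, each denominator factor $|X-Y|^2 + 4\alpha(X)\alpha(Y)$ is comparable to $|X - \sigma_\alpha Y|^2$ uniformly in $X,Y\in\overline{\a^+}$, and the stated estimate follows. The single point that requires care is the growth condition $N > km - 1$ of Lemma \ref{ai}: it is exactly this inequality that encodes the restriction $d \geq 3$, and in the borderline case $d = 2$ it fails, producing the logarithmic corrections which Lemmas \ref{a1} and \ref{a2} are designed to treat separately.
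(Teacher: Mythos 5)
Your proposal is correct and follows essentially the same route as the paper's proof: substitute $u=|X-Y|^2/(4t)$ in the time integral of the heat kernel estimate from Theorem \ref{heat}, apply Lemma \ref{ai} with $a=|X-Y|^2$, $b_\alpha \asymp \alpha(X)\alpha(Y)$ (the hypothesis $N>km-1$ being exactly $d>2$), and finish with the identity $|X-\sigma_\alpha Y|^2=|X-Y|^2+\tfrac{4}{|\alpha|^2}\,\alpha(X)\alpha(Y)$. Your added remarks on why $d\geq 3$ is needed and on the uniformity coming from the equal root lengths in $A_n$ are accurate refinements of the same argument.
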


\begin{proof}
We have, using Theorem \ref{heat} and the change of variables $u=|X-Y|^2/(4\,t)$
\begin{align*}
N^W(X,Y)&=\int_0^\infty\,p_t^W(X,Y)\,dt
\asymp \int_0^\infty\,\frac{t^{-d/2}\,e^{-|X-Y|^2/(4\,t)}\,dt}{\prod_{\alpha>0}\,(t+\alpha(X)\,\alpha(Y))^{k}}\\
&\asymp |X-Y|^{2-d}
\,\int_0^\infty\,\frac{u^{d/2-2}\,e^{-u}\,du}
{\prod_{\alpha>0}\,(|X-Y|^2/(4\,u)+\alpha(X)\,\alpha(Y))^{k}}\\
&\asymp |X-Y|^{2-d}
\,\int_0^\infty\,\frac{u^{ k|\Sigma^+|+ d/2-2}\,e^{-u}\,du}
{\prod_{\alpha>0}\,(|X-Y|^2+\alpha(X)\,\alpha(Y)\,u)^{k}}
\\
&\asymp |X-Y|^{2-d}
\,\frac{1}
{\prod_{\alpha>0}\,(|X-Y|^2+\alpha(X)\,\alpha(Y))^{k}}
\asymp \frac{|X-Y|^{2-d}}
{\prod_{\alpha>0}\,|X-\sigma_\alpha\,Y|^{2\,k}}
\end{align*}
(we have used Lemma \ref{ai} and the fact that $|X-\sigma_\alpha\,Y|^2=|X-Y|^2+2\,\alpha(X)\,\alpha(Y)$).
\end{proof}

\begin{proposition}
If $d=2$, the Newton kernel in the $A_1$ case satisfies
\begin{align*}
N^W(X,Y)\asymp \frac{\ln\left(1+\frac{|X-\sigma_\alpha Y|^2}{|X-Y|^2}\right)}{|X-\sigma_\alpha Y|^{2\,k}}\qquad
X,Y\in \overline{\a^+}.
\end{align*}
\end{proposition}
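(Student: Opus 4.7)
The proof should parallel Theorem \ref{Newton}, with $d=2$ forcing a single positive root and an exponent $d/2-2=-1$ that shifts us exactly into the regime of Lemma \ref{a1} rather than Lemma \ref{ai}. The plan is to start from $N^W(X,Y)=\int_0^\infty p_t^W(X,Y)\,dt$ and insert the sharp heat kernel estimate from Theorem \ref{heat}, which here reads
\begin{align*}
p_t^W(X,Y)\asymp \frac{t^{-1}\,e^{-|X-Y|^2/(4t)}}{(t+\alpha(X)\,\alpha(Y))^{k}},
\end{align*}
where $\alpha$ is the unique positive root of $A_1$.

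Next, I would perform the substitution $u=|X-Y|^2/(4t)$, noting that $dt/t = -du/u$, and that
\begin{align*}
t+\alpha(X)\,\alpha(Y)=\frac{|X-Y|^2+4\,\alpha(X)\,\alpha(Y)\,u}{4u}.
\end{align*}
This converts the integral into (a constant times)
\begin{align*}
\int_0^\infty \frac{u^{k-1}\,e^{-u}\,du}{\bigl(|X-Y|^2+4\,\alpha(X)\,\alpha(Y)\,u\bigr)^{k}},
\end{align*}
which is precisely the integral governed by Lemma \ref{a1} with $a=|X-Y|^2$ and $b=4\,\alpha(X)\,\alpha(Y)$. Applying the lemma yields
\begin{align*}
N^W(X,Y)\asymp \frac{\ln\!\bigl(2+4\,\alpha(X)\,\alpha(Y)/|X-Y|^2\bigr)}{\bigl(|X-Y|^2+4\,\alpha(X)\,\alpha(Y)\bigr)^{k}}.
\end{align*}

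The final step is cosmetic: use the identity $|X-\sigma_\alpha Y|^2=|X-Y|^2+2\,\alpha(X)\,\alpha(Y)$ (already invoked in the proof of Theorem \ref{Newton}) so that the denominator $|X-Y|^2+4\,\alpha(X)\,\alpha(Y)$ lies between $|X-\sigma_\alpha Y|^2$ and $2\,|X-\sigma_\alpha Y|^2$, hence is comparable to $|X-\sigma_\alpha Y|^{2}$. For the logarithm, since $X,Y\in\overline{\a^+}$ gives $\alpha(X)\,\alpha(Y)\geq 0$, the ratio $r=|X-\sigma_\alpha Y|^2/|X-Y|^2$ satisfies $r\geq 1$, and on this range $\ln(2+\mathrm{const}\cdot(r-1))\asymp \ln(1+r)$ since both sides are bounded below by a positive constant and grow like $\ln r$ as $r\to\infty$. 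Combining these equivalences yields the stated asymptotic.

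\noindent\textbf{Main obstacle.} There is no serious analytic obstacle; the real work has already been absorbed into Lemma \ref{a1}, which was clearly designed with this borderline case in mind. The only point requiring a little care is the comparison of logarithms — verifying that the shift from $\ln(2+b/a)$ to $\ln(1+|X-\sigma_\alpha Y|^2/|X-Y|^2)$ preserves the $\asymp$ relation uniformly in $X,Y$, which is the reason the $d=2$ case produces a genuinely different (logarithmic) answer from Theorem \ref{Newton}.
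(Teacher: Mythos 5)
Your proposal is correct and follows essentially the same route as the paper's proof: the substitution $u=|X-Y|^2/(4t)$, an application of Lemma \ref{a1} with $a=|X-Y|^2$ and $b$ a constant multiple of $\alpha(X)\,\alpha(Y)$, and the identity $|X-\sigma_\alpha Y|^2=|X-Y|^2+2\,\alpha(X)\,\alpha(Y)$ to recast both the logarithm and the denominator. The only difference is bookkeeping of harmless constants, and your careful check that the logarithm comparison is uniform for $r\geq 1$ is exactly the point the paper also relies on.
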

Here, it is important to recall that for $X$, $Y\in\overline{\a^+}$, we have  $|X-\sigma_\alpha Y|\geq|X-Y|$ and therefore, the numerator of the last expression is at least $\ln 2$ for $X\not=Y$.
This remark also applies  to the estimate in Proposition \ref{d2A2}.

\begin{proof}
With computations similar as in the case $d\geq3$, using Lemma \ref{a1},
\begin{align*}
N^W(X,Y)&=\int_0^\infty\,\frac{t^{-1}\,e^{-|X-Y|^2/(4\,t)}\,dt}{(t+\alpha(X)\,\alpha(Y))^{k}}
\asymp \int_0^\infty\,\frac{u^{k-1}\,e^{-u}\,du}{(|X-Y|^2+\alpha(X)\,\alpha(Y)\,u)^{k}}\\
&\asymp\frac{\ln(2+\alpha(X)\,\alpha(Y)/|X-Y|^2)}{(|X-Y|^2+\alpha(X)\,\alpha(Y))^{k}}
=\frac{\ln\left(\frac{2\,|X-Y|^2+\alpha(X)\,\alpha(Y)}{|X-Y|^2}\right)}{(|X-Y|^2+\alpha(X)\,\alpha(Y))^{k}}\\
&=  \frac{\ln\left(\frac{3}{2}+ \frac12\frac{|X-\sigma_\alpha Y|^2}{|X-Y|^2}\right)}
{(|X-Y|^2+\alpha(X)\,\alpha(Y))^{k}}
\asymp 
\frac{\ln\left(1+\frac{|X-\sigma_\alpha Y|^2}{|X-Y|^2}\right)}{|X-\sigma_\alpha Y|^{2\,k}}.
\end{align*}
\end{proof}

\begin{proposition}\label{d2A2}
If $d=2$, the Newton kernel in the $A_2$ case satisfies
\begin{align*}
N^W(X,Y)\asymp \frac{\ln\left(1+\frac{|X-\sigma_\omega\,Y|^{2\,k}}{|X-Y|^2}\right)}{|X-\sigma_\alpha Y|^{2\,k}\,|X-\sigma_\beta Y|^2\,|X-\sigma_{\alpha+\beta} Y|^{2\,k}},\qquad X,Y\in \overline{\a^+},
\end{align*}
where $\omega$ gives the minimum of $|X-\sigma_\omega Y|$ for $\omega\in\{\alpha,\beta\}$.
\end{proposition}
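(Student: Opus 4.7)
The plan is to imitate the proof of the preceding ($d=2$, $A_1$) proposition, but now with the three positive roots $\alpha$, $\beta$, $\alpha+\beta$ of $A_2$ and Lemma \ref{a2} playing the role of Lemma \ref{a1}. Starting from $N^W(X,Y)=\int_0^\infty p_t^W(X,Y)\,dt$ and inserting Theorem \ref{heat} with $d=2$, I would write
\begin{align*}
N^W(X,Y)\asymp \int_0^\infty \frac{t^{-1}\,e^{-|X-Y|^2/(4t)}}{\prod_{\alpha>0}(t+\alpha(X)\alpha(Y))^{k}}\,dt.
\end{align*}
The substitution $u=|X-Y|^2/(4t)$ turns $dt/t$ into $du/u$, and because $A_2$ has $|\Sigma^+|=3$ positive roots, pulling the factor $(4u/|X-Y|^2)^{k}$ out of each of the three denominator terms yields
\begin{align*}
N^W(X,Y)\asymp \int_0^\infty \frac{u^{3k-1}\,e^{-u}}{\prod_{\alpha>0}(|X-Y|^2+\alpha(X)\alpha(Y)\,u)^{k}}\,du.
\end{align*}

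This is exactly the situation of Lemma \ref{a2}, applied with $a=|X-Y|^2$ and with the three $b_i$'s equal to the values $\alpha(X)\alpha(Y)$ for the three positive roots, ordered so that $b_1$ is the smallest. The lemma produces
\begin{align*}
N^W(X,Y)\asymp \frac{\ln(2+b_1/|X-Y|^2)}{\prod_{\alpha>0}(|X-Y|^2+\alpha(X)\alpha(Y))^{k}}.
\end{align*}
To put this in the claimed geometric form, I would use the identity $|X-\sigma_\alpha Y|^2=|X-Y|^2+2\,\alpha(X)\alpha(Y)$, which for $X,Y\in\overline{\a^+}$ (where $\alpha(X)\alpha(Y)\geq 0$ and hence $|X-\sigma_\alpha Y|\geq |X-Y|$) gives $|X-Y|^2+\alpha(X)\alpha(Y)\asymp |X-\sigma_\alpha Y|^2$. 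Each factor in the denominator thus becomes $\asymp |X-\sigma_\alpha Y|^{2k}$. The root $\omega$ realizing $b_1=\min_{\alpha>0}\alpha(X)\alpha(Y)$ is by the same identity the root minimizing $|X-\sigma_\omega Y|$, exactly as in the statement, and $\ln(2+b_1/|X-Y|^2)$ is then equivalent to $\ln\!\bigl(1+|X-\sigma_\omega Y|^2/|X-Y|^2\bigr)$ by the same manipulation used in the $A_1$ case.

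The only delicate point is that Lemma \ref{ai} is \emph{just} inapplicable here: with $m=3$ denominator factors and $N=3k-1$, the condition $N>mk-1$ fails by equality, which is precisely why a logarithmic correction survives and one must appeal to the sharper Lemma \ref{a2}. Everything else is bookkeeping: verifying that the minimum of the $b_i$'s in Lemma \ref{a2} matches the minimizing $\omega\in\{\alpha,\beta,\alpha+\beta\}$ appearing in the statement, which is immediate from $|X-\sigma_\alpha Y|^2=|X-Y|^2+2\,\alpha(X)\alpha(Y)$ being monotone in $\alpha(X)\alpha(Y)$.
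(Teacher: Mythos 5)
Your proposal is correct and follows essentially the same route as the paper: the substitution $u=|X-Y|^2/(4t)$ reducing the integral to $\int_0^\infty u^{3k-1}e^{-u}\prod_{\eta}(|X-Y|^2+\eta(X)\eta(Y)u)^{-k}\,du$, an application of Lemma \ref{a2} with $a=|X-Y|^2$, and the identity $|X-\sigma_\eta Y|^2=|X-Y|^2+2\,\eta(X)\eta(Y)$ to recast the result geometrically. Your side remarks (that Lemma \ref{ai} fails exactly at $N=3k-1$, and that the minimizing root over all three positive roots necessarily lies in $\{\alpha,\beta\}$ since $(\alpha+\beta)(X)(\alpha+\beta)(Y)\ge\max\{\alpha(X)\alpha(Y),\beta(X)\beta(Y)\}$) are accurate and consistent with the paper.
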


\begin{proof}
With computations similar as in the case $d\geq3$, using Lemma \ref{a2},
\begin{align*}
N^W(X,Y)&\asymp\int_0^\infty\,\frac{t^{-1}\,e^{-|X-Y|^2/(4\,t)}\,dt}{(t+\alpha(X)\,\alpha(Y))^{k}\,(t+\beta(X)\,\beta(Y))^{k}\,(t+(\alpha+\beta)(X)\,(\alpha+\beta)(Y))^{k}}\\
&\asymp \int_0^\infty\,\frac{u^{3\,k-1}\,e^{-u}\,du}{\prod_{\eta\in\{\alpha,\beta,\alpha+\beta\}}(|X-Y|^2+\eta(X)\,\eta(Y)\,u)^{k}}\\
&\asymp \frac{\ln(2+\omega(X)\,\omega(Y)/|X-Y|^2)}{\prod_{\eta\in\{\alpha,\beta,\alpha+\beta\}}(|X-Y|^2+\eta(X)\,\eta(Y))^{k}}\\
&\asymp\frac{\ln\left(1+\frac{|X-\sigma_\omega\,Y|^2}{|X-Y|^2}\right)}{|X-\sigma_\alpha Y|^{2\,k}\,|X-\sigma_\beta Y|^{2\,k}\,|X-\sigma_{\alpha+\beta} Y|^{2\,k}}.
\end{align*}
where $\omega$ gives the minimum of $|X-\sigma_\omega Y|$ for $\omega\in\{\alpha,\beta\}$.
\end{proof}

\begin{remark}
In  the Dunkl  analysis, an  important role is played
by the intertwining operator $V_k$, defined
as a unique linear isomorphism  on the space of polynomial functions on $\R^d$  which intertwines the Dunkl operators with the usual partial derivatives:
\begin{align*}
T_\xi V_k = V_k \partial_\xi \quad \text{ for all } \xi \in \R^d
\end{align*}
and is normalized by $V_k(1)=1$.

The following general formula for the Dunkl Newton kernel  $N_k(x,y)$ involving the intertwining operator $V_k$ was proven in \cite{Gallardo2}:
\begin{align*}
N_k(X,Y)
&=\frac{2^{2\,\gamma}\,((d-2)/2)_\gamma}{|W|\,(d-2)\,w_d\,\pi(\rho)}\,V_k\left({(|Y|^2-2\,\langle X,\cdot\rangle+| X|^2)^{(2-d-2\,\gamma)/2}} \right)(Y)
\end{align*}
(we are using a slightly different normalization
of the operator $V_k$ than 
\cite{PGMRMY, Roesler}, see \cite{PGTLPS} for details.)
Little is known explicitly on the intertwining operator. 
Theorem \ref{Newton}  and the formula
 $
 N^W(X,,Y)=\frac{1}{|W|}\,\sum_{w\in W}\,N_k(w\,X,Y)$ imply the following explicit asymptotic formula.  
\begin{corollary}
For the root system $A_n$ and $d\geq 3$, we have  for $X$, $Y\in \overline{\a^+}$
\begin{align*}
V_k\left({(|Y|^2-\frac{2}{|W|}\,\sum_{w\in W}\,
\langle w\,X,\cdot\rangle+| X|^2)^{(2-d-2\,\gamma)/2}} \right)(Y)
\asymp \frac{|X-Y|^{2-d}}
{\prod_{\alpha>0}\,|X-\sigma_\alpha\,Y|^{2\,k}}.
\end{align*}
\end{corollary}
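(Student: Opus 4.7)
My plan is to combine the Gallardo formula for the Dunkl Newton kernel recalled in the remark with the symmetrization identity $N^W(X,Y)=\frac{1}{|W|}\sum_{w\in W}N_k(wX,Y)$, and then invoke Theorem~\ref{Newton} to conclude.

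First, I would substitute $wX$ for $X$ in the Gallardo formula. Since $W$ acts by orthogonal transformations on $\R^d$, we have $|wX|^2=|X|^2$, and therefore
\begin{align*}
N_k(wX,Y)=C\,V_k\!\left[(|Y|^2-2\langle wX,\cdot\rangle+|X|^2)^{(2-d-2\gamma)/2}\right](Y),
\end{align*}
where $C=\frac{2^{2\gamma}((d-2)/2)_\gamma}{|W|(d-2)\,w_d\,\pi(\rho)}$ is the normalizing constant from the Gallardo formula. Next I would average this identity over $w\in W$ and bring the average inside $V_k$ by linearity, expressing $N^W(X,Y)$ as $C$ times the $V_k$-expression appearing on the left-hand side of the corollary.

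Finally, I would identify $N^W(X,Y)$ with the right-hand side via Theorem~\ref{Newton}, absorbing the explicit multiplicative constant $C$ into the implicit constants of~$\asymp$.

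The main obstacle is really bookkeeping rather than mathematical depth. One must keep track of the normalization convention for $V_k$---the paper already flags that its $V_k$ differs slightly from that of~\cite{PGMRMY,Roesler}---and handle carefully the placement of the $W$-symmetrization, which arises naturally as an average of $|W|$ powered expressions brought inside $V_k$ by its linearity, only then appearing in the compact form written in the statement.
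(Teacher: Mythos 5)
Your argument is exactly the one the paper intends: substitute $wX$ into the Gallardo formula (using $|wX|=|X|$ since $W$ acts orthogonally), average over $W$, pull the average through $V_k$ by linearity, and identify the result with $N^W(X,Y)$, which Theorem~\ref{Newton} estimates. You also correctly isolate the only delicate point, namely that linearity yields the average of the $|W|$ powered expressions rather than the power of the averaged linear form, which is how the displayed left-hand side must be read.
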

\end{remark}
%%%%%%%%%%%%%%%%%%%%%%%%%%%%%%%%%%%%%%%%%%%%%%
%%%%%%%%%%%%%%%%%%%%%%%%%%%%%%%%%%%%%%%%%%%%%%
\subsection{Heat semigroups for fractional powers of $\Delta_k^W$}
\label{appFract}

Let $s\in(0,2)$. The fractional powers $(-\Delta_k^W)^{s/2}$
of the $W$-invariant  Dunkl Laplacian are the infinitesimal generators of
 important semigroups $(h^W_t(X,Y))_{t\ge 0}$,
called  {\it $W$-invariant Dunkl $s$-stable semigroups}. 

Fractional powers of the   Dunkl Laplacian and related semigroups and processes were considered for $s=1$ in \cite[p.75]{RoeslerHAB}, 
\cite[Section 5]{RoeslerVoit} and for any $s\in(0,2)$ in \cite{Bou,Rejeb}.
Stable semigroups on Riemannian symmetric spaces of non-compact type were studied in \cite{Getoor, PG}.

Like the heat semigroup $p^W_t(X,Y)$, the densities $h^W_t(X,Y)$ are to be considered with respect to the Dunkl weight function $\omega_k(Y)$ on $\R^d$.  We have
\begin{align*}
h^W_t(X,Y)=\int_0^\infty\,p^W_u(X,Y)\,\eta_t(u)\,du
\end{align*}
where $\eta_t(u)$ is the density of the $s/2$-stable subordinator, i.e. of a positive L\'evy process $(Y_t)_{t>0}$   with the Laplace transform
${\bf E}\left(\exp(z\,Y_t)\right)=\exp(-t\,z^{s/2}),\, z>0$ 
(see \cite{Bertoin} for more details).

Denote by $h^{\R^d}_t(X,Y))_{t\ge 0}$ the $s$-stable rotationally  invariant semigroup on $\R^d$, with generator $(-\Delta)^{s/2}$.
It is known (\cite{BlGetoor}) that
\begin{align}
h^{\R^d}_t(X,Y)\asymp 
\min\left\{\frac{1}{t^{d/s}},\frac{t}{|X-Y|^{d+s}}\right\}
\asymp \frac{t}{(t^{2/s}+|X-Y|^2)^{(d+s)/2}}.\label{euclid}
\end{align}

\begin{remark}\label{min}
It is useful to note that 
$\min\left\{t^{-d/s},{t}{|X-Y|^{-(d+s)}}\right\}=t^{-d/s}$
if and only if $t^{2/s}\geq |X-Y|^2$. 
\end{remark}

\begin{theorem} \label{th:stable}
Consider the $W$-invariant  Dunkl Laplacian in the $A_n$ case with multiplicity $k>0$. Then for $X$, $Y\in \overline{\a^+}$,
\begin{align*}
h^W_t(X,Y)
\asymp
\frac{h^{\R^d}_t(X,Y)}{
\prod_{\alpha>0}  (t^{2/s} +|X-\sigma_\alpha Y|^2)^{k}}
\asymp
\frac{h^{\R^d}_t(X,Y)}{
\prod_{\alpha>0}  (t^{2/s} +|X- Y|^2 +\alpha(X)\alpha(Y))^{k}}.
\end{align*}
\end{theorem}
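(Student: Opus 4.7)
The plan is to exploit the subordination formula
\begin{equation*}
h^W_t(X,Y)=\int_0^\infty p^W_u(X,Y)\,\eta_t(u)\,du
\end{equation*}
together with the sharp estimate for $p^W_u$ supplied by Theorem \ref{heat}. Recognising $u^{-d/2}\,e^{-|X-Y|^2/(4u)}$ as a constant multiple of $p^{\R^d}_u(X,Y)$, the theorem reduces to
\begin{equation*}
I\,:=\,\int_0^\infty \frac{p^{\R^d}_u(X,Y)\,\eta_t(u)}{\prod_{\alpha>0}(u+\alpha(X)\alpha(Y))^k}\,du\;\asymp\;\frac{h^{\R^d}_t(X,Y)}{\prod_{\alpha>0}(M+\alpha(X)\alpha(Y))^k},
\end{equation*}
where $M:=t^{2/s}+|X-Y|^2$. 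The equivalence of the two formulations in the statement then follows from $|X-\sigma_\alpha Y|^2=|X-Y|^2+2\alpha(X)\alpha(Y)$ together with the comparison $M+\alpha(X)\alpha(Y)\asymp t^{2/s}+|X-\sigma_\alpha Y|^2$.

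The central observation is that the Euclidean subordination integrand $p^{\R^d}_u\,\eta_t(u)$ concentrates its mass near $u\asymp M$. Using $p^{\R^d}_u\asymp u^{-d/2}\,e^{-|X-Y|^2/(4u)}$ together with the scaling $\eta_t(u)=t^{-2/s}\,\eta_1(u\,t^{-2/s})$, the super-polynomial vanishing of $\eta_1$ near $0$, and its power-law tail $\eta_1(v)\asymp v^{-1-s/2}$, a direct computation verifies in both regimes of \eqref{euclid} that
\begin{equation*}
\int_M^{2M} p^{\R^d}_u(X,Y)\,\eta_t(u)\,du\;\asymp\;h^{\R^d}_t(X,Y).
\end{equation*}
Since $u+\alpha(X)\alpha(Y)\asymp M+\alpha(X)\alpha(Y)$ on $[M,2M]$, restricting $I$ to this sub-interval immediately yields the lower bound.

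For the upper bound I would split $I$ at $u=M$. On $\{u\geq M\}$, $(u+\alpha(X)\alpha(Y))^k\gtrsim (M+\alpha(X)\alpha(Y))^k$, so this piece is dominated by $h^{\R^d}_t$ divided by the target product. The delicate piece is $\{u<M\}$: when several $\alpha(X)\alpha(Y)$ vanish (i.e.\ $X$ or $Y$ lies on a wall), the factor $\prod(u+\alpha(X)\alpha(Y))^{-k}$ may blow up like $u^{-k|\Sigma^+|}$ as $u\to 0$. The idea is to rescale $u=Mv$ with $v\in(0,1]$ and to use $\frac{M+\alpha(X)\alpha(Y)}{Mv+\alpha(X)\alpha(Y)}\leq v^{-1}$, reducing the task to the control of
\begin{equation*}
\int_0^1 v^{-d/2-k|\Sigma^+|}\,e^{-|X-Y|^2/(4Mv)}\,\eta_1\!\left(\tfrac{Mv}{t^{2/s}}\right)dv.
\end{equation*}
The Gaussian factor handles the regime $|X-Y|^2\geq t^{2/s}$ (so that $M\asymp|X-Y|^2$ and $e^{-1/(4v)}$ vanishes at the origin faster than any polynomial), while the stretched-exponential decay of $\eta_1$ near $0$ takes care of the opposite regime $M\asymp t^{2/s}$. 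In either case the $v$-integral is bounded by an absolute constant.

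The principal obstacle is this quantitative control of the low-$u$ contribution in the wall configurations: one has to verify that the super-polynomial vanishing of $\eta_1$ near the origin (or of the Gaussian) really dominates the algebraic blow-up $u^{-k|\Sigma^+|}$, uniformly in the parameters $X$, $Y$ and $t$, and to track constants through the $u\mapsto Mv$ rescaling. Once this is settled, assembling the two-sided bound and passing to the equivalent form with $|X-\sigma_\alpha Y|^2$ is routine.
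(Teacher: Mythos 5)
Your strategy is viable and, in its organization, genuinely different from the paper's proof. The paper treats the regimes $t^{2/s}\geq|X-Y|^2$ and $t^{2/s}\leq|X-Y|^2$ separately for both the upper and the lower bound, with two different changes of variables, using the pointwise bound \eqref{14} on $\eta_t$ for the upper bounds and the two-sided estimate \eqref{asympeta} (valid for $u\geq t^{2/s}$) for the lower bounds; in one regime it invokes Lemma \ref{ai}, in the other it splits the roots according to whether $t^{2/s}\leq\alpha(X)\alpha(Y)$. Your single parameter $M=t^{2/s}+|X-Y|^2$, the observation that the Euclidean subordination integral concentrates on $u\asymp M$ (correct: on $[M,2M]$ one has $u\geq t^{2/s}$ so \eqref{asympeta} applies, the Gaussian factor is $\asymp1$, and the integral is $\asymp tM^{-(d+s)/2}\asymp h^{\R^d}_t$ by \eqref{euclid}), and the split of the $u$-integral at $M$ compress these four computations into one lower-bound restriction and one upper-bound rescaling. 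The lower bound and the $u\geq M$ part of the upper bound are correct as you describe them.

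There is, however, a quantitative error in the last step of your treatment of $\{u<M\}$: it does not suffice to show that
\begin{equation*}
\int_0^1 v^{-d/2-k|\Sigma^+|}\,e^{-|X-Y|^2/(4Mv)}\,\eta_1\!\left(\tfrac{Mv}{t^{2/s}}\right)dv
\end{equation*}
is bounded by an absolute constant. Tracking the prefactors in the substitution $u=Mv$, the contribution of $\{u<M\}$ is at most $C\,M^{1-d/2}\,t^{-2/s}\,\prod_{\alpha>0}(M+\alpha(X)\alpha(Y))^{-k}$ times this integral, while the target is $h^{\R^d}_t\,\prod_{\alpha>0}(M+\alpha(X)\alpha(Y))^{-k}$ with $h^{\R^d}_t\asymp t\,M^{-(d+s)/2}$. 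Hence the displayed integral must be shown to be $\lesssim t^{1+2/s}M^{-1-s/2}$, a quantity which is $\leq1$ but arbitrarily small when $|X-Y|^2\gg t^{2/s}$; a bound by a constant alone would leave the upper estimate off by the unbounded factor $M^{1+s/2}t^{-1-2/s}$. The fix stays inside your framework: use the full bound \eqref{14} in the form $\eta_1(w)\lesssim w^{-1-s/2}e^{-w^{-s/2}}$, so that $\eta_1(Mv/t^{2/s})\lesssim t^{1+2/s}M^{-1-s/2}\,v^{-1-s/2}\,e^{-(t^{2/s}/(Mv))^{s/2}}$; after extracting the factor $t^{1+2/s}M^{-1-s/2}$ you are left with $\int_0^1 v^{-N}e^{-|X-Y|^2/(4Mv)}e^{-(t^{2/s}/(Mv))^{s/2}}\,dv$, which is bounded by an absolute constant because at least one of $|X-Y|^2/M$ and $t^{2/s}/M$ is $\geq1/2$, so at least one of the two exponentials decays like $e^{-cv^{-\epsilon}}$ as $v\to0$. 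With this correction your argument closes, and the passage to the form with $|X-\sigma_\alpha Y|^2$ via $|X-\sigma_\alpha Y|^2=|X-Y|^2+2\alpha(X)\alpha(Y)$ is indeed routine.
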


\begin{proof}
The proof is inspired by  the proof of \cite[Theorem 3.1]{Bogdan} providing estimates of stable semigroups on fractals.

Given Remark \ref{min}, it will make sense to consider the cases  $t^{2/s}\geq |X-Y|^2$ and  $t^{2/s}\leq |X-Y|^2$ separately.  In the proof, $m$ will denote the number of positive roots.

We start by showing that our estimate is an upper bound with an appropriate constant.  
In \cite[(14), page 168]{Bogdan}, it it shown that the subordinator density $\eta_t(u)$ satisfies
\begin{align}
\eta_t(u)\leq C\,t\,u^{-1-s/2}\,e^{-t\,u^{-s/2}}.\label{14}
\end{align}

Hence, using our estimates of the $W$-invariant Dunkl heat kernel in Theorem \ref{heat} and the change of variable $u=|X-Y|^2/(4\,w)$, we have
\begin{align*}
h^W_t(X,Y)
&\lesssim t\,\int_0^\infty\,\frac{u^{-d/2}\,e^{-|X-Y|^2/(4\,u)}\,u^{-1-s/2}\,e^{-t\,u^{-s/2}}\,du}{\prod_{\alpha>0}\,(u+\alpha(X)\,\alpha(Y))^{k}}
\\
&\lesssim t\,|X-Y|^{-(d+s)}\,\int_0^\infty\,\frac{w^{(d+s)/2-1}\,e^{-w}\,dw}{\prod_{\alpha>0}\,(|X-Y|^2/(4\,w)+\alpha(X)\,\alpha(Y))^{k}}\\
&\asymp t\,|X-Y|^{-(d+s)}\,\int_0^\infty\,\frac{w^{(d+s)/2+k\,m-1}\,e^{-w}\,dw}{\prod_{\alpha>0}\,(|X-Y|^2+\alpha(X)\,\alpha(Y)\,w)^{k}}\\
&\asymp \frac{t\,|X-Y|^{-(d+s)}}{\prod_{\alpha>0}\,(|X-Y|^2+\alpha(X)\,\alpha(Y))^{k}}
\end{align*}
with an application of Lemma \ref{ai} to get the last equivalence.  This proves the upper bound in the case $|X-Y|^2\geq t^{2/s}$.

We use Theorem \ref{heat} and the inequality \eqref{14}
again with the change of variable $u=t^{2/s}\,w^{-2/s}$.  Let $\Lambda$ be the set of $\alpha>0$ such that $t^{2/s}\leq\alpha(X)\,\alpha(Y)$
with $m'$ the number of elements in $\Lambda$. We have
\begin{align*}
h^W_t(X,Y)
&\lesssim t\,\int_0^\infty\,\frac{u^{-d/2}\,u^{-1-s/2}\,e^{-t\,u^{-s/2}}\,du}{\prod_{\alpha>0}\,(u+\alpha(X)\,\alpha(Y))^{k}}\\
&\asymp
t^{-d/s}\,\int_0^\infty\,\frac{w^{d/s}\,e^{-w}\,dw}{\prod_{\alpha>0}\,(t^{2/s}\,w^{-2/s}+\alpha(X)\,\alpha(Y))^{k}}
\\
&\asymp
t^{-d/s}\,\int_0^\infty\,\frac{w^{d/s+2\,k\,m/s}\,e^{-w}\,dw}{\prod_{\alpha>0}\,(t^{2/s}+\alpha(X)\,\alpha(Y)\,w^{2/s})^{k}}\\
&\lesssim
t^{-d/s}\,\frac{1}{(\prod_{\alpha\not\in \Lambda}\,t^{2/s})^{k}}
\,\int_0^\infty\,\frac{w^{d/s+2\,k\,m/s}\,e^{-w}\,dw}{\prod_{\alpha\in \Lambda}\,(\alpha(X)\,\alpha(Y)\,w^{2/s})^{k}}\\
&\asymp
\frac{t^{-d/s}}{\prod_{\alpha>0}\,(t^{2/s}+\alpha(X)\,\alpha(Y))^{k}}
\,\int_0^\infty\,w^{d/s+2\,k(m-m')/s}\,e^{-w}\,dw.
\end{align*}
This proves the upper bound in the case $|X-Y|^2\leq t^{2/s}$.

Now we will justify the lower bound. Recall (see \cite[(9,10), page 167]{Bogdan} or \cite[(9), page 89]{PG}) that for $u\geq t^{2/s}$, we have
\begin{equation}\label{asympeta}
\eta_t(u)\asymp t\,u^{-1-s/2}.
\end{equation}

If $|X-Y|^2\geq t^{2/s}$ then using formula \eqref{asympeta},
Theorem \ref{heat} and the change of variable $u=|X-Y|^2/(4\,w)$, we have
\begin{align*}
h^W_t(X,Y)&\gtrsim t\,\int_{t^{2/s}}^\infty\,\frac{u^{-d/2}\,e^{-|X-Y|^2/(4\,u)}\,u^{-1-s/2}\,du}{\prod_{\alpha>0}\,(u+\alpha(X)\,\alpha(Y))^{k}}\\
&\gtrsim t\,|X-Y|^{-(d+s)}\,\int_0^{|X-Y|^2/(4\,t^{2/s})}\,\frac{w^{(d+s)/2-1}\,e^{-w}\,dw}{\prod_{\alpha>0}\,(|X-Y|^2/(4\,w)+\alpha(X)\,\alpha(Y))^{k}}\\
&\gtrsim t\,|X-Y|^{-(d+s)}\,\int_{1/8}^{1/4}\,\frac{w^{(d+s)/2-1}\,e^{-w}\,dw}{\prod_{\alpha>0}\,(|X-Y|^2+\alpha(X)\,\alpha(Y))^{k}}
\end{align*}
which proves the lower bound in that case.

Now assume $|X-Y|^2\leq t^{2/s}$.  We use formula \eqref{asympeta} and
Theorem \ref{heat}. Then, since $-1/4\leq -|X-Y|^2/(4\,t^{2/s})\leq -|X-Y|^2/(4\,u)\leq 0$ for $u\geq t^{2/s}$, using the change of variable $u=t^{2/s}\,w$, we have
\begin{align*}
h^W_t(X,Y)&\gtrsim t\,\int_{t^{2/s}}^\infty\,\frac{u^{-d/2}\,e^{-|X-Y|^2/(4\,u)}\,u^{-1-s/2}\,du}{\prod_{\alpha>0}\,(u+\alpha(X)\,\alpha(Y))^{k}}\\
&\gtrsim t\,\int_{t^{2/s}}^\infty\,\frac{u^{-(d+s)/2-1}\,du}{\prod_{\alpha>0}\,(u+\alpha(X)\,\alpha(Y))^{k}}\\
&\gtrsim  t^{-d/s}\,\int_1^\infty\,\frac{w^{-(d+s)/2-1}\,dw}{\prod_{\alpha>0}\,(t^{2/s}\,w+\alpha(X)\,\alpha(Y))^{k}}\\
&\gtrsim  t^{-d/s}\,\int_1^\infty\,\frac{w^{-(d+s)/2-1}\,dw}{w^{k\,m}\,\prod_{\alpha>0}\,(t^{2/s}+\alpha(X)\,\alpha(Y))^{k}}\\
&\asymp  \frac{t^{-d/s}}{\prod_{\alpha>0}\,(t^{2/s}+\alpha(X)\,\alpha(Y))^{k}}
\end{align*}
which proves the lower bound in that case.
\end{proof}

\begin{remark}
The upper estimate  in Theorem \ref{th:stable} may be deduced from 
our  estimates of the $W$-invariant Dunkl heat kernel in Theorem \ref{heat}  and from \cite[Corollary 3.8 p. 11]{Trojan}. 
\end{remark}

\section*{Acknowledgments}
The authors wish to acknowledge the financial support from the grants IEA00292 CNRS 2021/22 and  MIR l'Universit\'e d'Angers ``Sym\'etries'' 2021.

The first author thanks the Dominican University College of Ottawa for their hospitality. The second author gratefully acknowledges the hospitality of l'Universit\'e d'Angers for his stay during which part of this paper was written.

The authors are thankful to Tomasz Grzywny, Tomasz Luks and Bartosz Trojan for helpful discussions and suggestions concerning  fractional Dunkl Laplacian and stable semigroups.

\end{document}